\newtheorem{theorem}{Theorem}[section]
\newtheorem{conject}[theorem]{Conjecture}
\newtheorem{lem}[theorem]{Lemma}
\theoremstyle{definition}
\theoremstyle{remark}
\theoremstyle{cor}
\newtheorem{cor}[theorem]{Corollary}
\newtheorem{remark}[theorem]{Remark}
\numberwithin{equation}{section}
\begin{document}

\title{On a partial sum related to the Euler totient function}


\author{Ankush Goswami}
\address{Department of Mathematics, University of Florida, 
Gainesville, Fl 32603}
\email{ankush04@ufl.edu}


\subjclass[2010]{11N25, 11N37, 11N60}
\keywords{Riemann zeta function, Euler totient function, Dirichlet series, Perron's formula, zero-free region.}
%


\begin{abstract}
Recently, Bordell\'{e}s, Dai, Heyman, Pan and Shparlinski in \cite{Igor} considered a partial sum involving the Euler totient function and the integer parts $\lfloor x/n\rfloor$ function. Among other things, they obtained reasonably tight upper and lower bounds for their sum using the theory of exponent pairs and in particular, using a recently discovered Bourgain's exponent pair. Based on numerical evidences, they also pose a question on the asymptotic behaviour for their sum which we state here as a conjecture. The aim of this paper is to prove an asymptotic formula for the average of their sum in the interval $[1,x]$. We show via Perron's formula that this average is a certain weighted analogue of the sum that Bordell\'{e}s \textit{et al} considered in their paper. Further, we show that their conjecture is true under certain conditions. Our proof involves Perron's contour integral method besides some analytic estimates of the Riemann zeta function $\zeta(s)$ in the zero-free region.    
\end{abstract}
\maketitle
\section{Introduction}
Let $\varphi(n)$ denote the Euler's totient function which is defined as the number of positive integers less than and coprime to $n$. Then for any real $x\geq 2$, it is well-known that  
\begin{eqnarray}\label{ES}
\sum_{n\leq x}\varphi(n)=\dfrac{x^2}{2\zeta(2)}+O(x\log(x))
\end{eqnarray}
where as usual $\zeta(s)$ is the Riemann zeta function. Recently, Bordell\'{e}s, Dai, Heyman, Pan and Shparlinski (BDHPS) considered the following variant of the sum in (\ref{ES}):
\begin{eqnarray}\label{EvS}
S_{\varphi}(x):=\sum_{n\leq x}\varphi\left(\left\lfloor x/n \right\rfloor\right).
\end{eqnarray}
From (\ref{ES}) and the observation that 
\begin{eqnarray}
\sum_{n\leq x}\left\lfloor\dfrac{x}{n}\right\rfloor = \sum_{n\leq x}\tau(n)=x\log(x)+(2\gamma-1)x+O(\sqrt{x})
\end{eqnarray}
where $\gamma$ is the Euler's constant and $\tau(n)=\sum_{d|n}1$ is the divisor function, they proved the following result.
\begin{theorem}[BDHPS]\label{Shpar}
Uniformly for all $x\geq 3$,
\begin{eqnarray*}
\ \ \ \left(\dfrac{2629}{4009}\cdot\dfrac{1}{\zeta(2)}+\dfrac{1380}{4009}+o(1)\right)x\log(x)\geq S_{\varphi}(x)\geq \left(\dfrac{2629}{4009}\cdot\dfrac{1}{\zeta(2)}+o(1)\right)x\log(x).
\end{eqnarray*}
where $o(1)\rightarrow 0$ as $x\rightarrow\infty$. 
\end{theorem}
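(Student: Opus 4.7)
The plan is to split $S_\varphi(x)$ at a parameter $y = x^\alpha$, bound the short range $n \leq y$ trivially, and extract a main term from the long range $n > y$ via a fractional-part expansion combined with exponent-pair estimates for the resulting error sum.

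First I would write
\[
S_\varphi(x) = \sum_{n \leq y} \varphi(\lfloor x/n \rfloor) + \sum_{y < n \leq x} \varphi(\lfloor x/n \rfloor) =: S_1(x,y) + S_2(x,y).
\]
For the lower bound in the theorem the non-negativity of $\varphi$ lets me discard $S_1$, while for the upper bound I would use the trivial inequality $\varphi(m) \leq m$ to obtain $S_1 \leq x \sum_{n \leq y} 1/n = \alpha x \log x + O(x)$. To analyse $S_2$ I would regroup the terms by the level sets of $\lfloor x/n \rfloor$: for each positive integer $k < K := \lfloor x/y \rfloor$, the number of $n \in (y, x]$ with $\lfloor x/n \rfloor = k$ equals $\lfloor x/k \rfloor - \lfloor x/(k+1) \rfloor$, up to a single boundary adjustment at $n \approx y$. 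Writing $\lfloor t \rfloor = t - 1/2 - \psi(t)$ with the sawtooth $\psi(t) = \{t\} - 1/2$ then gives
\[
S_2(x,y) = x \sum_{k < K} \frac{\varphi(k)}{k(k+1)} - \sum_{k < K} \varphi(k)\bigl(\psi(x/k) - \psi(x/(k+1))\bigr) + O(x/y),
\]
and the classical asymptotic $\sum_{k \leq K} \varphi(k)/k^2 = (\log K)/\zeta(2) + C + O((\log K)/K)$ together with partial summation supplies the main term $\tfrac{1-\alpha}{\zeta(2)}\, x \log x + O(x)$.

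The crux of the argument is the estimation of the error sum $E(x, K) := \sum_{k \leq K} \varphi(k) \psi(x/k)$. I would Möbius invert using $\varphi = \mu * \mathrm{id}$ to obtain a bilinear expression $\sum_{de \leq K} \mu(d)\, e\, \psi(x/(de))$, split the $d$- and $e$-ranges dyadically, and bound each block via an exponent pair $(\kappa, \lambda)$ applied to the inner sum of $\psi$-values; this yields a bound of the shape $x^\kappa K^{1+\lambda-\kappa}$ up to logarithmic factors. Balancing this error against the trivial contribution $\alpha x \log x$ from $S_1$ produces, with Bourgain's exponent pair as input, the optimal choice $\alpha = 1380/4009$; feeding this value back into the main-term calculation yields the advertised coefficients $2629/(4009\,\zeta(2))$ and $2629/(4009\,\zeta(2)) + 1380/4009$.

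The principal obstacle is the exponent-pair analysis of $E(x,K)$: one must carefully partition the ranges of the two Möbius variables, apply the exponent pair only on the blocks with favourable geometry, and handle the remaining pieces by van der Corput or Weyl-type estimates. The awkward rational $1380/4009$ is precisely the fingerprint of the recent Bourgain pair in this optimization, and any sharpening of known exponent pairs would immediately improve both constants in the theorem.
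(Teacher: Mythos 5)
The statement is Theorem~\ref{Shpar}, which the paper does not itself prove: it is quoted directly from BDHPS \cite{Igor}, and the only comment the paper makes on its proof is Remark~1.3, which attributes the argument to the theory of exponent pairs (in particular a recent pair of Bourgain) combined with the $A$- and $B$-processes. There is therefore no internal proof in the paper against which to check your argument; the best one can do is compare it with that description and with the cited source. Your outline --- split at $y = x^\alpha$, use $0 \le \varphi(m) \le m$ on the short range $n \le y$, reindex the long range $y < n \le x$ by level sets of $\lfloor x/n\rfloor$, expand the floor via the sawtooth $\psi$, extract the main term from the classical asymptotic for $\sum_{k\le K}\varphi(k)/k^2$, and control the residual $\sum_{k\le K}\varphi(k)\psi(x/k)$ by M\"obius inversion, dyadic decomposition, and exponent pairs --- is exactly the BDHPS route, and your numerology ($\alpha = 1380/4009$, hence $1-\alpha = 2629/4009$, with the upper bound picking up an extra $\alpha x\log x$ from the trivial short-range estimate) reproduces the stated constants. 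This is consistent with Remark~1.3. It is worth emphasizing that the present paper's own contributions (Theorem~\ref{main} and its corollaries) deliberately take a completely different route, via Perron's formula and zero-free-region estimates for $\zeta(s)$, and make no use of the exponent-pair machinery you sketch; so while your proposal is a plausible reconstruction of the cited BDHPS proof, it is orthogonal to everything actually proved in this manuscript.
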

The upper and lower bounds in Theorem \ref{Shpar} were sharpened by J. Wu \cite{Wu} and very recently, by S. Chern \cite{Chern}. Based on numerical evidences that BDHPS obtain towards the end of their paper, they pose the following question which we state here as a conjecture.
\begin{conject}[BDHPS]\label{conj}
With $S_{\varphi}(x)$ defined in $(\ref{EvS})$, we have 
\begin{eqnarray*}
S_{\varphi}(x)=\dfrac{x\log(x)}{\zeta(2)}(1+o(1)),\;x\rightarrow\infty.
\end{eqnarray*}
\end{conject}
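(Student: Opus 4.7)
My plan is to establish the integrated version of the Conjecture unconditionally via Perron's formula and a contour shift into the zero-free region of $\zeta(s)$, and then attempt to descend to the pointwise statement by a Tauberian argument.

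\emph{Step 1: Reduce an integrated form of $S_\varphi$ to a standard Dirichlet partial sum.} I would set $T(x):=\int_0^x S_\varphi(t)\,dt$. Interchanging summation with integration and changing variables $u=t/n$ gives
\begin{equation*}
T(x)=\sum_{n\leq x} n\int_1^{x/n}\varphi(\lfloor u\rfloor)\,du.
\end{equation*}
Partitioning $[1,x/n]$ at integers and writing $\Phi(Y):=\sum_{k\leq Y}\varphi(k)$, a short calculation yields
\begin{equation*}
\int_1^{Y}\varphi(\lfloor u\rfloor)\,du=\Phi(\lfloor Y\rfloor)-\varphi(\lfloor Y\rfloor)(1-\{Y\}),
\end{equation*}
so that $T(x)=\sum_{n\leq x} n\,\Phi(\lfloor x/n\rfloor)+O(x^2)$. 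Interchanging the order of summation in $\sum_{n\leq x} n\,\Phi(\lfloor x/n\rfloor)=\sum_{nk\leq x} n\,\varphi(k)$ and collecting by $m=nk$ collapses this into $\sum_{m\leq x}(\mathrm{id}\ast\varphi)(m)$, whose Dirichlet series is $\zeta(s-1)^2/\zeta(s)$. This partial sum of $\mathrm{id}\ast\varphi$ should be the ``weighted analogue'' of BDHPS's sum hinted at in the abstract, since equivalently $\sum_{n\leq x}n\Phi(\lfloor x/n\rfloor)=\sum_{k\leq x}\varphi(k)\cdot\tfrac{\lfloor x/k\rfloor(\lfloor x/k\rfloor+1)}{2}$, i.e.\ a triangular-number-weighted version of $S_\varphi$.

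\emph{Step 2: Apply Perron's formula.} With $c>2$, I would write
\begin{equation*}
\sum_{m\leq x}(\mathrm{id}\ast\varphi)(m)=\frac{1}{2\pi i}\int_{c-iT}^{c+iT}\frac{\zeta(s-1)^2}{\zeta(s)}\cdot\frac{x^s}{s}\,ds+R(x,T),
\end{equation*}
and shift the line of integration to the left of $\mathrm{Re}(s)=2$. The integrand has a \emph{double} pole at $s=2$ (from the square of the simple pole of $\zeta(s-1)$, with $\zeta(s)$ regular and non-vanishing there). Laurent-expanding $\zeta(s-1)=(s-2)^{-1}+\gamma+O(s-2)$ together with $x^s=x^2(1+(s-2)\log x+\cdots)$, the residue at $s=2$ evaluates to
\begin{equation*}
\frac{x^2\log x}{2\zeta(2)}+C x^2
\end{equation*}
for an explicit constant $C$. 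Pushing the contour into the classical zero-free region $\sigma\geq 1-c/\log(|t|+2)$ and using the convexity-type bound for $\zeta(s-1)$ together with $1/\zeta(s)\ll\log|t|$ there, a careful balancing of the truncation height $T$ against the horizontal position of the new contour should yield
\begin{equation*}
T(x)=\frac{x^2\log x}{2\zeta(2)}+O\bigl(x^2\exp(-c\sqrt{\log x})\bigr).
\end{equation*}

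\emph{Step 3: Tauberian descent.} Since $T'(x)=S_\varphi(x)$ at every non-integer $x$, the asymptotic for $T$ formally differentiates to the Conjecture; making this rigorous will be the main obstacle. A one-sided Tauberian hypothesis on $S_\varphi$ --- for instance that $S_\varphi(x+h)-S_\varphi(x)$ is not too negative for $h=h(x)$ in an appropriate range --- would convert the integrated asymptotic into the pointwise one, matching the ``under certain conditions'' qualification promised by the abstract. An unconditional resolution would instead require showing that the oscillating sum
\begin{equation*}
E(x)=\sum_{k\leq x}\varphi(k)\bigl(\{x/k\}-\{x/(k+1)\}\bigr)
\end{equation*}
appearing in the exact decomposition $S_\varphi(x)=x\sum_{k\leq x}\tfrac{\varphi(k)}{k(k+1)}-E(x)$ is $o(x\log x)$ \emph{pointwise}, for which no present exponent-pair technology (not even Bourgain's) appears to suffice. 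This is precisely why the averaged route through Perron's formula is natural: it smooths out exactly those fractional-part oscillations that block the pointwise Conjecture.
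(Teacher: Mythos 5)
Your Step 1 is a genuinely different reduction from the paper's, and arguably a cleaner one. The paper works with $\Phi(n)=\sum_{d\mid n}(\varphi(d)-\varphi(d-1))$, whose Dirichlet series $\zeta(s)\sum_n(\varphi(n)-\varphi(n-1))/n^s$ has no closed form and forces the author into the auxiliary function $G_\varphi(s)$, the Stieltjes-integral representation, and a delicate decomposition of $D_\varphi(s)$ into four pieces $J_1,\dots,J_4$. You instead integrate $S_\varphi$ first, interchange sum and integral, change variables, and land directly on the partial sum $\sum_{m\le x}(\mathrm{id}*\varphi)(m)$, whose Dirichlet series $\zeta(s-1)^2/\zeta(s)$ is completely explicit. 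Both routes target $\int_1^x S_\varphi(t)\,dt=\frac{x^2\log x}{2\zeta(2)}+O(x^2)$, and both have an irreducible $O(x^2)$ coming from the boundary/fractional-part terms in the respective reductions, so the final error is the same; but your decomposition avoids the messiest parts of the paper (the analysis of $I_3(s)$ and the ad hoc splitting of $J_3$). Step 3 also tracks the paper: since the Conjecture remains open, the paper only proves it under Tauberian side conditions (monotonicity of $S_\varphi^*$, a bound on $\int_x^{x+h}(S_\varphi(t)-S_\varphi(x))\,dt$, a one-sided bound $\Phi(n)\ge -C\log n$), and your proposed one-sided hypothesis is of the same flavor. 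Your closing identity $S_\varphi(x)=x\sum_{k\le x}\varphi(k)/(k(k+1))-E(x)+O(x)$ and the observation that the unconditional statement is exactly $E(x)=o(x\log x)$ is correct and matches the obstruction the paper identifies.

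The one genuine gap is in Step 2, in the contour shift. You propose pushing the contour into the \emph{classical zero-free region of $\zeta(s)$}, i.e.\ to $\sigma\ge 1-c/\log(|t|+2)$, citing the bound $1/\zeta(s)\ll\log|t|$ there and a ``convexity-type bound'' for $\zeta(s-1)$. But your integrand carries $\zeta(s-1)^2$, and near $\mathrm{Re}(s)=1$ one has $\mathrm{Re}(s-1)\approx 0$, where the convexity/functional-equation bound gives $\zeta(s-1)\ll |t|^{1/2+\varepsilon}$ and hence $\zeta(s-1)^2\ll|t|^{1+\varepsilon}$. The vertical integral $\int_{|t|\le T}|\zeta(s-1)|^2\,|\zeta(s)|^{-1}\,x^\sigma/|s|\,dt$ is then of size $xT^{1+\varepsilon}$, which is not small --- the shift fails. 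The correct move is to shift only slightly to the left of the actual pole at $s=2$, say to $\sigma_1=2-c/\log T$, so that $\mathrm{Re}(s-1)=1-c/\log T$ and one can use the standard bound $\zeta(\sigma+it)\ll\log|t|$ for $\sigma\ge 1-c/\log|t|$; in that strip $1/\zeta(s)$ is simply bounded (you are nowhere near the zero-free region boundary). With $T=\exp(\sqrt{\log x})$ this recovers the $x^2\exp(-c'\sqrt{\log x})$ estimate from the shift, and the overall error is $O(x^2)$, dominated by the reduction step --- which is exactly the paper's Theorem~\ref{main}. So the approach survives, but the role of the zero-free region in your setting is not what you state: it is the growth of $\zeta(s-1)^2$ in the critical strip of $s-1$, not the zeros of $\zeta(s)$, that confines the shift.
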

In fact, they cautiously believe that Conjecture \ref{conj} is true. The aim of this paper is to prove an averaged version of Conjecture \ref{conj} (see below). Before that, we state a few important remarks below, some of which are also highlighted in \cite{Igor}. 
\begin{remark}
If we define $\tau_x(n)$ by
\begin{eqnarray*}\label{xdiv}
\tau_{x}(n):=\sum_{\substack{d|n\\(d,\lfloor dx/n\rfloor)=1}}1,
\end{eqnarray*} 
then by interchanging sums, we can see that 
\begin{eqnarray*}\label{resdiv}
S_{\varphi}(x)=\sum_{n\leq x}\tau_x(n).
\end{eqnarray*}
In other words, let for each $n\geq 1$ we denote by $\mathcal{D}(x,n)=\left\{d|n: (d,\lfloor dx/n\rfloor)=1\right\}$. Then we have
\begin{eqnarray*}
\tau_x(n)=\sum_{d\in \mathcal{D}(x,n)}1=\left.\tau(n)\right\vert_{\mathcal{D}(x,n)}
\end{eqnarray*}
where by $\left.\tau(n)\right\vert_{\mathcal{D}(x,n)}$ we simply mean the restriction of the function $\tau(n)$ on the set $\mathcal{D}(x,n)$. Hence $S_\varphi(x)$ is the sum of the restricted divisor function $\tau_x(n)$ as described above. Unfortunately, unlike $\tau(n)$, the function $\tau_x(n)$ is not multiplicative and thus it is reasonably difficult to obtain an explicit asymptotic formula for $S_{\varphi}(x)$.  
\end{remark}
\begin{remark}
The proof of Theorem \emph{\ref{Shpar}} relies on the theory of exponent pairs and in particular, to a recently discovered exponent pair of Bourgain, combined with the so called $A$- and $B$- processes.

Our method of proof of the main results (see below) is different than the proof of Theorem \emph{\ref{Shpar}}. We use contour integral method and the theory of Dirichlet series. In particular, our proof uses analytic estimates of the Riemann zeta function $\zeta(s)$ in the zero-free region. 
\end{remark}
\begin{remark}
We note here that our consideration of the weighted sum (see below) involving the totient function makes some of our computations easier. Without the weight, it seems reasonably difficult to obtain an asymptotic estimate of $S_\varphi(x)$. The choice of our weight is standard as in the proof of the prime number theorem in \cite{Rada}. In fact, one can also choose a Riesz-type weight \emph{(see \cite{Mont})} and get similar asymptotic formula. 
\end{remark}
%
\section{Main results}
\begin{theorem}\label{main}
Uniformly for $x\geq 2$ we have
\begin{eqnarray}\label{maineq}
\int_{1}^xS_{\varphi}(t)\;dt=\dfrac{x^2\log x}{2\zeta(2)}+O(x^2).
\end{eqnarray}
\end{theorem}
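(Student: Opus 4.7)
My strategy is to interchange the outer integration with the defining sum of $S_\varphi$, make a single change of variables, and then reduce everything to the classical estimate \eqref{ES}. The Perron/zero-free-region machinery advertised in the introduction would be required to sharpen the error beyond $O(x^2)$, but for the bound stated in Theorem~\ref{main} the elementary estimate \eqref{ES} is already enough.

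First I would use Fubini (the condition $n\le t$ with $1\le t\le x$ becomes $1\le n\le x$ and $n\le t\le x$) and then the substitution $u=t/n$ in each inner integral, which yields
\begin{equation*}
\int_1^x S_\varphi(t)\,dt \;=\; \sum_{n\le x}\int_n^x \varphi(\lfloor t/n\rfloor)\,dt \;=\; \sum_{n\le x} n\int_1^{x/n}\varphi(\lfloor u\rfloor)\,du.
\end{equation*}
Decomposing $\int_1^{Y}\varphi(\lfloor u\rfloor)\,du$ over unit subintervals $[m,m+1)$ identifies it with $\sum_{m\le Y}\varphi(m)$ up to an $O(Y)$ boundary term coming from the fractional part of $Y$, so \eqref{ES} gives
\begin{equation*}
\int_1^{Y}\varphi(\lfloor u\rfloor)\,du \;=\; \frac{Y^2}{2\zeta(2)}+O(Y\log Y).
\end{equation*}

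Substituting $Y=x/n$ back in, the main-term contribution is $\frac{x^2}{2\zeta(2)}\sum_{n\le x}\tfrac{1}{n}=\frac{x^2\log x}{2\zeta(2)}+O(x^2)$ from the harmonic asymptotic $\sum_{n\le x}1/n=\log x+O(1)$, while the error contribution is $O\!\bigl(x\sum_{n\le x}\log(x/n)\bigr)$; Stirling's formula delivers $\sum_{n\le x}\log(x/n)=x+O(\log x)$, so this too is $O(x^2)$. Combining the pieces gives the asserted asymptotic.

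I do not foresee a genuine obstacle: the argument is essentially book-keeping once one notices that averaging in $t$ smooths out the arithmetic irregularity of $\lfloor t/n\rfloor$ that obstructs a direct asymptotic for $S_\varphi(x)$ itself. The only point requiring a little care is the boundary regime $n$ near $x$ (where $x/n$ approaches $1$ and the main term/error term distinction in \eqref{ES} degenerates into a trivial bound); but the number of such $n$ is $O(x)$ with each contributing $O(n)=O(x)$, so these terms are safely absorbed into the claimed $O(x^2)$.
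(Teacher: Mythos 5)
Your proof is correct and takes a genuinely different, far more elementary route than the paper's. The paper rewrites $S_\varphi(t)=\sum_{n\le t}\Phi(n)+O(t)$ with $\Phi(n)=\sum_{d\mid n}(\varphi(d)-\varphi(d-1))$, identifies $\frac{1}{x}\int_1^x T_\varphi(t)\,dt$ with the weighted sum $T^{a}_\varphi(x)=\sum_{n\le x}\Phi(n)(1-n/x)$, and evaluates it by the weighted Perron formula as a contour integral of $D_\varphi(s)\,x^s/(s(s+1))$; carrying this through requires the meromorphic continuation of $D_\varphi(s)$ to $\sigma>0$ (Theorem \ref{tm1}), a contour shift into the classical zero-free region, and the estimates on $\zeta$ and $1/\zeta$ from Theorem \ref{zetaest}, applied separately to the four pieces $J_1,\dots,J_4$. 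You replace all of that with Fubini, the substitution $u=t/n$, and three elementary inputs: (\ref{ES}), $\sum_{n\le x}1/n=\log x+O(1)$, and $\sum_{n\le x}\log(x/n)=x+O(\log x)$. Both routes land on the same final error $O(x^2)$, so for Theorem \ref{main} itself the complex-analytic machinery buys nothing quantitatively; its dividends are the Dirichlet-series structure and residue data that the paper reuses in the corollaries and in the Riesz-mean variant mentioned in the conclusions, and a template that transfers to other arithmetic functions whose generating series have comparable representations, as the author remarks.

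One detail to make airtight: the bound $\int_1^{Y}\varphi(\lfloor u\rfloor)\,du=\frac{Y^2}{2\zeta(2)}+O(Y\log Y)$ fails as $Y\to1^+$, since at $Y=1$ the left side is $0$ while the right side is $\frac{1}{2\zeta(2)}$. You flag this yourself; the clean fix is to split at $n=x/2$. For $n\le x/2$ one has $Y=x/n\ge 2$ and the estimate is valid, while for $x/2<n\le x$ the inner integral is exactly $x/n-1$, so $n\int_1^{x/n}\varphi(\lfloor u\rfloor)\,du=x-n$, and both $\sum_{x/2<n\le x}(x-n)$ and the corresponding main-term piece $\frac{x^2}{2\zeta(2)}\sum_{x/2<n\le x}\frac{1}{n}$ are $O(x^2)$, so the conclusion is unaffected.
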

\begin{cor}\label{cor1}
Assume that there exists an $\alpha\in\mathbb{R}$ such that
\begin{eqnarray*}
S_{\varphi}(x)=\alpha x\log x(1+o(1)),
\end{eqnarray*}
as $x\rightarrow\infty$. Then
\begin{eqnarray}
\alpha=\dfrac{1}{\zeta(2)}.
\end{eqnarray}
\end{cor}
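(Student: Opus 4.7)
The plan is to substitute the hypothesized pointwise asymptotic for $S_\varphi$ into the integral identity of Theorem~\ref{main} and compare the leading terms. The proof will be a short Abel-type (or Tauberian) deduction: since Theorem~\ref{main} pins down the value of the integral of $S_\varphi$ up to $O(x^2)$, any genuine asymptotic of the form $\alpha x\log x$ for $S_\varphi$ itself must give the same integrated leading term $\tfrac{1}{2}\alpha x^2\log x$, which forces $\alpha=1/\zeta(2)$.

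More concretely, the first step is to fix an arbitrary $\varepsilon>0$ and use the hypothesis to produce a threshold $T=T(\varepsilon)$ such that for all $t\geq T$ one has $|S_\varphi(t)-\alpha t\log t|\leq \varepsilon\,t\log t$. Then one splits
\begin{eqnarray*}
\int_1^x S_\varphi(t)\,dt = \int_1^T S_\varphi(t)\,dt + \int_T^x S_\varphi(t)\,dt,
\end{eqnarray*}
where the first piece is $O_\varepsilon(1)$ (a constant depending on $T$ but not on $x$). On the second piece the hypothesis gives
\begin{eqnarray*}
\int_T^x S_\varphi(t)\,dt = \alpha\int_T^x t\log t\,dt + O\!\left(\varepsilon\int_T^x t\log t\,dt\right) = \frac{\alpha\,x^2\log x}{2} + O(x^2) + O(\varepsilon\,x^2\log x),
\end{eqnarray*}
after using the elementary antiderivative $\int t\log t\,dt = \tfrac{t^2\log t}{2}-\tfrac{t^2}{4}$.

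The second step is to compare the resulting expression with Theorem~\ref{main}. Combining both estimates yields
\begin{eqnarray*}
\frac{\alpha\,x^2\log x}{2} + O(\varepsilon\,x^2\log x) + O(x^2) = \frac{x^2\log x}{2\zeta(2)} + O(x^2).
\end{eqnarray*}
Dividing through by $x^2\log x / 2$ and letting $x\to\infty$ gives $|\alpha - 1/\zeta(2)|\leq C\varepsilon$ for some absolute constant $C$, and since $\varepsilon>0$ was arbitrary, we conclude $\alpha=1/\zeta(2)$.

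The argument is essentially routine, so there is no substantive obstacle; the only point requiring mild care is that the $o(1)$ in the hypothesis is not a priori uniform in $t$, which is precisely what the splitting at $t=T$ handles. No new analytic machinery beyond Theorem~\ref{main} is needed.
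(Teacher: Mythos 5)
Your proposal is correct and follows essentially the same route as the paper: integrate the hypothesized asymptotic over $[1,x]$ and compare the leading term with Theorem \ref{main}. The only difference is that you make the integration of the $o(1)$ term rigorous via an explicit $\varepsilon$--$T$ splitting, which the paper leaves implicit.
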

\begin{cor}\label{cor2}
Suppose there exist functions $S^{*}_\varphi(x)$ and $E_\varphi(x)$ such that 
\begin{eqnarray}
S_\varphi(x)=S^{*}_\varphi(x)+E_\varphi(x), \;\;x\geq 2
\end{eqnarray}
where $E_\varphi(x)=o(x\log x)$ and $S^{*}_\varphi(x)$ is monotonically non-decreasing. Then 
\begin{eqnarray}
S_\varphi(x)=\dfrac{x\log x}{\zeta(2)}(1+o(1))
\end{eqnarray} 
as $x\rightarrow\infty$. In other words, we can formally differentiate both sides of \emph{(\ref{maineq})} to get the conjectured asymptotic formula for $S_\varphi(x)$.
\end{cor}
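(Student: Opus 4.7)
The plan is to derive the pointwise asymptotic from the integrated asymptotic of Theorem \ref{main} via a standard Tauberian-type argument, using the monotonicity of $S^*_\varphi$ as the key compactness hypothesis.

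First I would split the integral in (\ref{maineq}) according to the hypothesized decomposition:
\begin{eqnarray*}
\int_1^x S_\varphi(t)\,dt = \int_1^x S^*_\varphi(t)\,dt + \int_1^x E_\varphi(t)\,dt.
\end{eqnarray*}
Since $E_\varphi(t)=o(t\log t)$, for each $\delta>0$ there is a threshold beyond which $|E_\varphi(t)|\leq \delta t\log t$, so $\int_1^x E_\varphi(t)\,dt = o(x^2\log x)$. Combining with Theorem \ref{main}, this yields
\begin{eqnarray*}
\int_1^x S^*_\varphi(t)\,dt = \frac{x^2\log x}{2\zeta(2)} + o(x^2\log x).
\end{eqnarray*}

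Next I would exploit monotonicity of $S^*_\varphi$. For any $h>0$, the non-decreasing property gives the sandwich
\begin{eqnarray*}
\frac{1}{h}\int_{x-h}^{x} S^*_\varphi(t)\,dt \;\leq\; S^*_\varphi(x) \;\leq\; \frac{1}{h}\int_{x}^{x+h} S^*_\varphi(t)\,dt.
\end{eqnarray*}
I would set $h=\epsilon x$ for a small parameter $\epsilon>0$ and substitute the integrated asymptotic. The main term $\frac{1}{2\zeta(2)\epsilon x}\bigl[(x+\epsilon x)^2\log(x+\epsilon x)-x^2\log x\bigr]$ expands to $\frac{x\log x}{\zeta(2)}(1+O(\epsilon)) + O(x)$, while the $o(x^2\log x)$ error, once divided by $\epsilon x$, contributes $o(x\log x)/\epsilon$. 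Choosing $\epsilon$ small and then letting $x\to\infty$ (so that the $o$-term becomes negligible relative to the fixed $\epsilon$) gives $\limsup_{x\to\infty} S^*_\varphi(x)/(x\log x)\leq 1/\zeta(2)$. The symmetric argument with the lower sandwich gives the matching liminf, so $S^*_\varphi(x)\sim x\log x/\zeta(2)$.

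Finally, since $E_\varphi(x)=o(x\log x)$, adding $E_\varphi$ back preserves the asymptotic and yields $S_\varphi(x)=\frac{x\log x}{\zeta(2)}(1+o(1))$, as claimed. The main obstacle is the bookkeeping in the sandwich step: the quantifiers on $\epsilon$ (from monotonicity slack) and $\delta$ (from the $o$-error) must be interleaved correctly, with $\epsilon$ chosen first to control the $O(\epsilon)$ structural error and $\delta$ chosen afterwards so that $\delta/\epsilon$ is also small. Everything else is routine differencing of $\tfrac{x^2\log x}{2\zeta(2)}$ and careful handling of the $o$-term after division by $h=\epsilon x$.
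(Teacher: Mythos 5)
Your proposal is correct and follows essentially the same route as the paper: the paper's differencing over $[x,\beta_1 x]$ with $\beta_1\to 1^+$ (and $[\beta_2 x, x]$ with $\beta_2\to 1^-$) is exactly your sandwich with $h=\epsilon x$, and the order of limits ($x\to\infty$ first for fixed $\epsilon$, then $\epsilon\to 0$) matches the paper's handling of the quantifiers. No gaps.
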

\begin{cor}\label{cor3}
Let for $1\leq h<x$ and 
\begin{eqnarray}
L:=\int_{x}^{x+h}(S_\varphi(t)-S_\varphi(x))\;dt
\end{eqnarray}
we have
\begin{eqnarray}\label{coreq1}
L\ll h^2\log x+x^2.
\end{eqnarray}
Then Conjecture \emph{\ref{conj}} is true.
\end{cor}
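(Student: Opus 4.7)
The plan is to differentiate the averaged asymptotic of Theorem \ref{main} on a short interval $[x,x+h]$, using the hypothesis on $L$ as a substitute for the monotonicity assumption in Corollary \ref{cor2}. The starting point is the defining identity
\begin{eqnarray*}
\int_{x}^{x+h}S_\varphi(t)\,dt=h\,S_\varphi(x)+L,
\end{eqnarray*}
which shows that once $L$ is controlled by (\ref{coreq1}), the integral of $S_\varphi$ over $[x,x+h]$ can be replaced by $h\,S_\varphi(x)$ with an explicit error. Theorem \ref{main} then supplies an independent evaluation of the left-hand side, and equating the two expressions yields a pointwise asymptotic for $S_\varphi(x)$ after an appropriate choice of $h$.

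First I would apply Theorem \ref{main} at both $x+h$ and $x$ and subtract. Since $1\le h<x$, the two $O$-terms combine to $O(x^2)$, giving
\begin{eqnarray*}
\int_x^{x+h} S_\varphi(t)\,dt=\frac{(x+h)^2\log(x+h)-x^2\log x}{2\zeta(2)}+O(x^2).
\end{eqnarray*}
A routine Taylor expansion of the numerator, using $\log(1+h/x)=h/x+O((h/x)^2)$ together with $h<x$, produces
\begin{eqnarray*}
(x+h)^2\log(x+h)-x^2\log x=2xh\log x+O(h^2\log x+xh),
\end{eqnarray*}
so that
\begin{eqnarray*}
\int_x^{x+h} S_\varphi(t)\,dt=\frac{xh\log x}{\zeta(2)}+O(h^2\log x+x^2).
\end{eqnarray*}

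Next, combining with the identity $\int_x^{x+h}S_\varphi(t)\,dt=h\,S_\varphi(x)+L$ and invoking (\ref{coreq1}), then dividing by $h$, I obtain
\begin{eqnarray*}
S_\varphi(x)=\frac{x\log x}{\zeta(2)}+O\!\left(h\log x\right)+O\!\left(\frac{x^2}{h}\right).
\end{eqnarray*}
For both error terms to be $o(x\log x)$ I need $h=o(x)$ and $x/\log x=o(h)$ simultaneously. This window is non-empty, and the balanced choice $h=x/\sqrt{\log x}$ (which lies in $[1,x)$ for $x$ large) makes both errors equal to $x\sqrt{\log x}=o(x\log x)$, establishing Conjecture \ref{conj}.

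The argument is structurally straightforward rather than technically hard: Theorem \ref{main} supplies the averaged asymptotic and the hypothesis on $L$ supplies exactly the local regularity needed to pass from an integrated to a pointwise statement. The only point I would double-check is that the admissible window $x/\log x\ll h\ll x$ is non-empty and lies within the range $1\le h<x$ where (\ref{coreq1}) is posited, which is immediate for $x$ large.
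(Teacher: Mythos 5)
Your proposal is correct and follows essentially the same route as the paper: subtract the averaged asymptotic of Theorem \ref{main} at $x+h$ and $x$, use the identity $\int_x^{x+h}S_\varphi(t)\,dt=hS_\varphi(x)+L$ together with the hypothesis on $L$, divide by $h$, and balance with $h=x(\log x)^{-1/2}$. The paper makes exactly this choice of $h$ and obtains the same error term $O(x\sqrt{\log x})$.
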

\begin{cor}\label{cor4}
Let $T^{a}_\varphi(x):=\displaystyle\sum_{n\leq x}\Phi(n)\left(1-\dfrac{n}{x}\right)$ where $\Phi(n):=\displaystyle\sum_{d|n}(\varphi(d)-\varphi(d-1))$. Also, let the coefficients $\Phi(n)$ satisfy the condition
\begin{eqnarray}\label{bou}
\Phi(n)\geq -C\log n,\;n\geq 1,
\end{eqnarray}
for some $C\geq 0$. Then Conjecture \emph{\ref{conj}} is true.
\end{cor}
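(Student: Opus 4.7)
The plan is to reduce Corollary \ref{cor4} to Theorem \ref{main} via the key identity
\begin{equation*}
\sum_{n\le x}\Phi(n)=S_\varphi(x).
\end{equation*}
I would verify this by first interchanging the order of summation in the definition of $\Phi$ to get $\sum_{n\le x}\Phi(n)=\sum_{d\le x}(\varphi(d)-\varphi(d-1))\lfloor x/d\rfloor$, and then rewriting $S_\varphi$ by grouping its summands according to the value $m=\lfloor x/n\rfloor$, which yields $S_\varphi(x)=\sum_{m\le x}\varphi(m)(\lfloor x/m\rfloor-\lfloor x/(m+1)\rfloor)$. A single Abel summation against the weights $\varphi(m)$ --- using $\varphi(0)=0$ and the vanishing $\lfloor x/(\lfloor x\rfloor+1)\rfloor=0$ to kill the boundary term --- converts the latter expression into the former.

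Combining this identity with the elementary reformulation $\sum_{n\le x}a_n(1-n/x)=x^{-1}\int_0^x(\sum_{n\le t}a_n)\,dt$ gives $T_\varphi^a(x)=x^{-1}\int_0^x S_\varphi(t)\,dt$, and then Theorem \ref{main} immediately yields
\begin{equation*}
T_\varphi^a(x)=\frac{x\log x}{2\zeta(2)}+O(x).
\end{equation*}
The remaining task is the Tauberian one of passing from this average estimate to a pointwise asymptotic for $S_\varphi(x)$; this is where the one-sided hypothesis (\ref{bou}) enters. Set $\Psi(n):=\Phi(n)+C\log n\ge 0$, so that
\begin{equation*}
N(x):=\sum_{n\le x}\Psi(n)=S_\varphi(x)+C\sum_{n\le x}\log n
\end{equation*}
is non-decreasing. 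Stirling gives $\sum_{n\le x}\log n=x\log x+O(x)$, and a similar computation yields $\sum_{n\le x}\log n\,(1-n/x)=\tfrac12 x\log x+O(x)$; hence $\int_0^x N(t)\,dt=Ax^2\log x+O(x^2)$ with $A=\tfrac12(\zeta(2)^{-1}+C)$.

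For any fixed $\delta\in(0,1)$ the monotonicity of $N$ furnishes the standard sandwich
\begin{equation*}
\delta x\,N(x)\le\int_x^{x(1+\delta)}N(t)\,dt,\qquad \int_{x(1-\delta)}^x N(t)\,dt\le\delta x\,N(x),
\end{equation*}
and both integrals expand as $2A\delta\, x^2\log x+O(\delta^2 x^2\log x+x^2)$. Dividing by $\delta x\log x$, sending $x\to\infty$, and then letting $\delta\to 0$ squeezes $N(x)\sim 2Ax\log x=(\zeta(2)^{-1}+C)\,x\log x$. Subtracting off $C\sum_{n\le x}\log n\sim Cx\log x$ cancels the $C$-contribution and leaves $S_\varphi(x)\sim x\log x/\zeta(2)$, which is Conjecture \ref{conj}. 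The main obstacle is precisely this Tauberian squeeze: hypothesis (\ref{bou}) is exactly what is needed --- and no more --- to shift $S_\varphi$ by the known main term $C\sum_{n\le x}\log n$ into a monotone partial sum, so that a Karamata-type argument recovers the pointwise asymptotic from the average one.
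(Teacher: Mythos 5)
Your argument is correct and is essentially the paper's own proof: both pass from the averaged asymptotic of Theorem \ref{main} to the pointwise one by comparing $S_\varphi(x)$ with the integral of $S_\varphi$ over $[x,(1+\delta)x]$ and over $[(1-\delta)x,x]$, using hypothesis (\ref{bou}) to control the error in the one direction where monotonicity can fail, and then letting $\delta\to 0$ after $x\to\infty$. The only (cosmetic) difference is that you monotonize first by adding $C\log n$ and then apply the standard squeeze, whereas the paper keeps $\Phi$ as is and uses (\ref{bou}) to bound from below the overshoot sums $U_\varphi(x)=h^{-1}\sum_{x<n<x+h}\Phi(n)(x+h-n)$ and its analogue $V_\varphi(x)$ with $h=\varepsilon x$.
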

\section{Some preliminaries}
We introduce some notations and conventions below. 
\subsection{Notations}
We denote by $\mathbb{N}, \mathbb{R}, \mathbb{C}$ for the set of natural numbers (including zero), real numbers and complex numbers respectively. The letter $n$ will always denote a positive integer. For a positive real number $x$, we denote by $[x]$ the integral part of $x$ which is the nearest integer $\leq x$. Unless otherwise stated, $s=\sigma+it$ will always denote a complex number with $\sigma=\text{Re}(s)$ and $t=\text{Im}(s)$. $\zeta(s)$ denotes the Riemann zeta function which is defined as
\begin{eqnarray}
\zeta(s):=\sum_{n=1}^\infty\dfrac{1}{n^s},\;\;\sigma>1.
\end{eqnarray}
The Mobius function $\mu(n)$ is defined as $(-1)^{\nu(n)}$ when $n$ is square-free and zero otherwise where $\nu(n)$ is the count of the number of distinct prime factors of $n$.
\subsection{Basic theory of Dirichlet series and Perron integration} 
Let $f:\mathbb{N}\rightarrow\mathbb{C}$ be an arithmetical function. Consider the sum 
\begin{eqnarray}\label{sumasymp}
S_{f}(x):=\sum_{n\leq x}f(n).
\end{eqnarray}
and consider the Dirchlet series generating function for the sequence $\{f(n): n\in\mathbb{N}\}$ which is defined by
\begin{eqnarray}
D_f(s):=\sum_{n=1}^\infty\dfrac{f(n)}{n^s}.
\end{eqnarray}  
Then we have (see \cite{Apo, Mont, Ten})
\begin{theorem}[Perron's formula]\label{Perroni}
Let $D_f(s)$ be absolutely convergent for $\sigma_0>\sigma_a$. Then we have 
\begin{eqnarray}
\dfrac{1}{2\pi i}\int_{\sigma_0-i\infty}^{\sigma_0+i\infty}D_f(s)\dfrac{x^s}{s}ds=\sideset{}{'}\sum_{n\leq x}f(n)
\end{eqnarray}
where $\sum'$ indicates that the last term in the sum above is to be counted with weight $1/2$.
\end{theorem}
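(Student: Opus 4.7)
The plan is to deduce Perron's formula from the classical Mellin--Perron discontinuous integral by a careful truncate-and-interchange argument.

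\emph{Step 1 (discontinuous kernel).} First I would establish the kernel identity: for any $c>0$ and $y>0$,
\begin{eqnarray*}
\delta(y):=\dfrac{1}{2\pi i}\int_{c-i\infty}^{c+i\infty}\dfrac{y^s}{s}\,ds=\begin{cases}1,& y>1,\\ 1/2,& y=1,\\ 0,& 0<y<1,\end{cases}
\end{eqnarray*}
where the improper integral is interpreted as the symmetric limit $\lim_{T\to\infty}\int_{c-iT}^{c+iT}$. When $y>1$, I shift the contour to the line $\mathrm{Re}(s)=-U$ with $U\to\infty$, picking up the simple pole at $s=0$ with residue $1$; the horizontal segments at height $\pm T$ contribute $O(y^c/T)$ (plus a decaying tail from negative $\sigma$ since $y^\sigma\to 0$), and the left vertical segment vanishes as $U\to\infty$. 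For $0<y<1$ the contour is shifted to the right, no pole is crossed, and analogous bounds give $0$. For $y=1$ I compute the principal value directly from $\int_{-T}^{T}c/(c^2+t^2)\,dt\to\pi$, giving $1/2$.

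\emph{Step 2 (formal application to $D_f$).} Setting $c=\sigma_0$ and $y=x/n$ in the kernel identity, each term of the Dirichlet series combines with $x^s/s$ to produce
\begin{eqnarray*}
\dfrac{1}{2\pi i}\int_{\sigma_0-i\infty}^{\sigma_0+i\infty}\dfrac{f(n)}{n^s}\cdot\dfrac{x^s}{s}\,ds=f(n)\,\delta(x/n),
\end{eqnarray*}
and summing over $n$ would yield $\sum_{n<x}f(n)+\frac12 f(x)\mathbf{1}_{x\in\mathbb{N}}=\sideset{}{'}\sum_{n\leq x}f(n)$. The entire content of the proof is in justifying this term-by-term integration of the Dirichlet series.

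\emph{Step 3 (interchange via truncation).} This is the delicate step, because although $D_f(s)$ converges absolutely on $\mathrm{Re}(s)=\sigma_0$, the kernel integral $\int(y^s/s)\,ds$ only converges conditionally, so Fubini does not apply directly. I would first truncate: for finite $T>0$, the integral over $[\sigma_0-iT,\sigma_0+iT]$ has finite length and $D_f(s)=\sum_n f(n)/n^s$ converges uniformly in $s$ along this segment (by the Weierstrass $M$-test, since $|f(n)/n^s|\leq |f(n)|/n^{\sigma_0}$ and $\sum_n|f(n)|/n^{\sigma_0}<\infty$). Hence for each $T$,
\begin{eqnarray*}
\dfrac{1}{2\pi i}\int_{\sigma_0-iT}^{\sigma_0+iT}D_f(s)\dfrac{x^s}{s}\,ds=\sum_{n=1}^\infty f(n)\cdot\dfrac{1}{2\pi i}\int_{\sigma_0-iT}^{\sigma_0+iT}\dfrac{(x/n)^s}{s}\,ds.
\end{eqnarray*}
To pass $T\to\infty$ inside the sum I invoke dominated convergence, using the standard uniform bound (derivable from Step 1 by the same contour-shift argument applied to a finite $T$)
\begin{eqnarray*}
\left|\dfrac{1}{2\pi i}\int_{\sigma_0-iT}^{\sigma_0+iT}\dfrac{(x/n)^s}{s}\,ds\right|\ll \dfrac{(x/n)^{\sigma_0}}{T\,|\log(x/n)|}+\delta(x/n)\ll \dfrac{x^{\sigma_0}}{n^{\sigma_0}}\cdot M_n,
\end{eqnarray*}
where $M_n$ is $T$-independent and such that $\sum|f(n)|M_n/n^{\sigma_0}<\infty$ (handling separately the finitely many $n$ with $n=x$, where the kernel is bounded by $O(1)$, and the nearby $n$ via the explicit $\log$-term). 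Dominated convergence then lets each summand tend to $f(n)\delta(x/n)$, recovering the weighted partial sum.

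The main obstacle throughout is exactly this conditional-convergence issue in Step 3: one cannot naively swap the infinite series with the improper integral, and the proof really depends on having the truncated kernel bound from Step 1 strong enough to dominate the series term by term uniformly in $T$. Once that domination is in hand, the rest of the argument is routine.
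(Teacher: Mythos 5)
The paper does not actually prove this theorem: it is quoted as a classical result with a pointer to Apostol, Montgomery--Vaughan and Tenenbaum, and your argument (discontinuous kernel, truncation, uniform convergence of the series on the finite segment, then a $T$-uniform domination via the truncated Perron bound to pass to the limit) is precisely the standard proof given in those references, with the one genuinely delicate point --- that Fubini fails on the full line and must be replaced by truncation plus domination --- correctly identified and handled. The only thing worth flagging is that the statement implicitly requires $\sigma_0>0$ in addition to $\sigma_0>\sigma_a$ (so that the pole of $y^s/s$ at $s=0$ lies to the left of the contour), an assumption your Step 1 uses by taking $c>0$.
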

For convenience, it is sometimes easier to deal with a weighted form of Theorem \ref{Perroni}. There are several ways in which one can choose the weights (see \cite{Mont}). We only describe one such case.
\begin{theorem}\label{Perroni1}
Let $k$ be a non-negative integer and consider the weighted sum
\begin{eqnarray}
S_f^{k}(x):=\sum_{n\leq x}f(n)\left(1-\dfrac{n}{x}\right)^k,
\end{eqnarray}
then 
\begin{eqnarray}
S_f^{k}(x)=\dfrac{k!}{2\pi i}\int_{\sigma_0-i\infty}^{\sigma_0+i\infty}D_f(s)\dfrac{x^s}{s(s+1)(s+2)\cdots(s+k)}ds.
\end{eqnarray}
\end{theorem}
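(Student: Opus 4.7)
The strategy is to prove the theorem by first establishing the Mellin--Barnes identity
\begin{equation*}
\frac{1}{2\pi i}\int_{c-i\infty}^{c+i\infty}\frac{y^s}{s(s+1)(s+2)\cdots(s+k)}\,ds=\begin{cases}\dfrac{(1-1/y)^k}{k!} & \text{if } y\ge 1,\\ 0 & \text{if } 0<y<1,\end{cases}\qquad c>0,
\end{equation*}
and then applying it termwise to the Dirichlet series $D_f(s)=\sum_n f(n)n^{-s}$ with $y=x/n$. I would prove the identity by a contour shift: for $y>1$ the integrand has simple poles only at $s=0,-1,\ldots,-k$, and pushing the vertical line to $\mathrm{Re}\,s=-N-\tfrac12$ with $N\to\infty$ kills the shifted integral because $|y^s|=y^{-N-1/2}\to 0$ while the denominator gives polynomial decay $|t|^{-(k+1)}$. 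The residue theorem then evaluates the original integral as
\begin{equation*}
\sum_{j=0}^{k}\mathrm{Res}_{s=-j}\frac{y^s}{s(s+1)\cdots(s+k)}=\sum_{j=0}^{k}\frac{(-1)^j}{j!(k-j)!}\,y^{-j}=\frac{(1-1/y)^k}{k!},
\end{equation*}
where the last equality is the binomial theorem. For $0<y<1$, shifting to the right instead shows that the integral vanishes, since the integrand is holomorphic there and $|y^s|=y^{N}\to 0$.

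Second, I would substitute $D_f(s)=\sum_n f(n) n^{-s}$ into the right-hand side of the claimed formula and interchange the sum with the integral,
\begin{equation*}
\frac{k!}{2\pi i}\int_{\sigma_0-i\infty}^{\sigma_0+i\infty}D_f(s)\,\frac{x^s}{s(s+1)\cdots(s+k)}\,ds=\sum_{n=1}^{\infty}f(n)\cdot\frac{k!}{2\pi i}\int_{\sigma_0-i\infty}^{\sigma_0+i\infty}\frac{(x/n)^s}{s(s+1)\cdots(s+k)}\,ds.
\end{equation*}
Applying the Mellin--Barnes identity above with $y=x/n$ to each inner integral produces $(1-n/x)^k$ for $n<x$ and $0$ for $n>x$; when $n=x$ the factor $(1-n/x)^k$ is zero for $k\ge 1$, so these terms do not contribute. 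The series therefore collapses to exactly $S^k_f(x)$.

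The main technical obstacle lies in the rigorous justification of the interchange in the second step. Since $\sigma_0>\sigma_a$, the series $\sum_n|f(n)|n^{-\sigma_0}$ converges; moreover, for $k\ge 1$ the kernel satisfies $|x^s/[s(s+1)\cdots(s+k)]|\ll|t|^{-(k+1)}$ along the line $\mathrm{Re}\,s=\sigma_0$, providing an absolutely integrable majorant so that Fubini's theorem applies. In the case $k=0$ the statement formally recovers the unweighted Perron formula (Theorem~\ref{Perroni}), but there the kernel decays only like $|t|^{-1}$, so the interchange requires a finite truncation of the contour together with an explicit error-term bound---this is precisely the technical complication that weighted formulations such as the one stated here are designed to circumvent, making Theorem~\ref{Perroni1} especially convenient for applications such as the proof of Theorem~\ref{main}.
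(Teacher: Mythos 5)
Your proof is correct, and it is the standard argument: the paper itself does not prove Theorem \ref{Perroni1} but simply quotes it from the literature (citing \cite{Mont}), so your write-up supplies exactly the justification the paper leaves to a reference. The residue computation $\sum_{j=0}^{k}(-1)^j y^{-j}/(j!(k-j)!)=(1-1/y)^k/k!$ is right, the vanishing for $0<y<1$ by shifting right is right, and your Fubini justification via $\sum_n|f(n)|n^{-\sigma_0}<\infty$ together with the $|t|^{-(k+1)}$ decay of the kernel is the correct reason the termwise application is legitimate for $k\ge 1$ (the only case the paper actually uses, namely $k=1$). Two minor points worth recording: the contour shift to $\mathrm{Re}\,s=-N-\tfrac12$ should formally be done through a large rectangle, with the horizontal segments also shown to vanish (this is routine here since every factor $|s+i|$ is large on those segments); and, as you correctly flag, the case $k=0$ as literally stated needs the $\sum'$ convention at integral $x$ and a conditionally convergent (truncated) integral, so the clean absolute-convergence argument genuinely requires $k\ge 1$ --- which is precisely why the paper works with the weighted sum.
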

\section{Some initial transformations and results}
In this section, we will make some transformations on the sum $S_{\varphi}(x)$ and express it as a contour integral as described in the previous section. To do that, we first rewrite $S_{\varphi}(x)$ as follows. 
\begin{eqnarray}\label{Sphi}
S_\varphi(x)=\sum_{n\leq x}\left(\left\lfloor\dfrac{x}{n}\right\rfloor-\left\lfloor\dfrac{x}{n+1}\right\rfloor\right)\varphi(n)=T_{\varphi}(x)+O(x)
\end{eqnarray}
where
\begin{eqnarray}
T_{\varphi}(x)=\sum_{n\leq x}\left\lfloor\dfrac{x}{n}\right\rfloor(\varphi(n)-\varphi(n-1))
\end{eqnarray}
and we define $\varphi(0)=1$. We shall now look at the sum $T_\varphi(x)$ more closely. It is clear that
\begin{eqnarray}\label{Tphi}
T_{\varphi}(x)=\sum_{n\leq x}\Phi(n)
\end{eqnarray}
where 
\begin{eqnarray}
\Phi(n)=\sum_{d|n}(\varphi(d)-\varphi(d-1)).
\end{eqnarray}
We now consider the following weighted sum:  
\begin{eqnarray}\label{weightsum}
T_\varphi^{a}(x)&=&\sum_{n\leq x}\Phi(n)\left(1-\dfrac{n}{x}\right).
\end{eqnarray}
We then have
\begin{eqnarray}\label{smooth}
T_\varphi^{a}(x)&=&\dfrac{1}{x}\sum_{n\leq x}\Phi(n)\int_{n}^xdt=\dfrac{1}{x}\int_1^xT_\varphi(t)\;dt.
\end{eqnarray}
Next, consider the Dirichlet series 
\begin{eqnarray*}
D_\varphi(s)=\sum_{n=1}^\infty\dfrac{\Phi(n)}{n^s}.
\end{eqnarray*} 
By Dirichlet convolution, it is clear that 
\begin{eqnarray}\label{Diri}
D_\varphi(s)=\zeta(s)\sum_{n=1}^\infty\dfrac{\varphi(n)-\varphi(n-1)}{n^s}.
\end{eqnarray}
We next rewrite the series $\mathcal{D}_\varphi(s)$ in (\ref{Diri}) as follows.
\begin{eqnarray}\label{Dphi}
D_\varphi(s)&=&\zeta(s)\sum_{n=1}^\infty\dfrac{\varphi(n)}{n^s}-\zeta(s)\sum_{n=1}^\infty\dfrac{\varphi(n-1)}{n^s}\notag\\
&=& \zeta(s)\cdot\dfrac{\zeta(s-1)}{\zeta(s)}-\zeta(s)\cdot G_\varphi(s)\notag\\
&=&\zeta(s-1)-\zeta(s)\cdot G_\varphi(s)
\end{eqnarray}
where 
\begin{eqnarray*}
G_\varphi(s)=\sum_{n=1}^\infty\dfrac{\varphi(n-1)}{n^s}
\end{eqnarray*}
First, note that for any $s\in\mathbb{C}$ with $\sigma>2$, $D_\varphi(s)$ is absolutely convergent. Thus for $k=1$, we see from Theorem \ref{Perroni1} that
\begin{eqnarray}\label{Perravg}
T_\varphi^{a}(x)=\dfrac{1}{2\pi i}\int_{\sigma-i\infty}^{\sigma+i\infty}D_{\varphi}(s)\dfrac{x^{s}}{s(s+1)}ds.
\end{eqnarray}
We next prove the following result.
\begin{theorem}\label{tm1}
Let $s\in\mathbb{C}$. Then $D_{\varphi}(s)$ is meromorphic for $\sigma>0$ with a single pole at $s=1$ of order $2$. The residue of the pole at $s=1$ is
\begin{eqnarray}
\dfrac{\zeta(2)-\zeta'(2)}{\zeta^2(2)}-1-\sum_{n=1}^\infty\dfrac{\varphi(n)}{n^2(n+1)}\approx -0.8343893\cdots.
\end{eqnarray}  
\end{theorem}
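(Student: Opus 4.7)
The plan is to obtain an explicit meromorphic continuation of $D_\varphi(s)$ valid in the half-plane $\sigma > 0$ and then read off the Laurent expansion at $s=1$.

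First I would replace the shifted sum $F(s) := \sum_{m \geq 1}\varphi(m)(m+1)^{-s}$ appearing inside $G_\varphi(s) = 1 + F(s)$ by its second-order Taylor comparison with $m^{-s}$. Writing $(m+1)^{-s} = m^{-s} - s m^{-s-1} + \bigl[(m+1)^{-s} - m^{-s} + s m^{-s-1}\bigr]$ and summing against $\varphi(m)$ yields, for $\sigma > 2$,
\begin{equation*}
F(s) = \frac{\zeta(s-1)}{\zeta(s)} - \frac{s\zeta(s)}{\zeta(s+1)} + R(s), \qquad R(s) := \sum_{m \geq 1}\varphi(m)\bigl[(m+1)^{-s} - m^{-s} + s m^{-s-1}\bigr].
\end{equation*}
Taylor's theorem with integral remainder gives the bound $|(m+1)^{-s} - m^{-s} + s m^{-s-1}| \ll |s(s+1)|\,m^{-\sigma-2}$, and since $\varphi(m) \leq m$ the series defining $R(s)$ converges locally uniformly for $\sigma > 0$; hence $R$ is holomorphic on this half-plane. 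Plugging back into $D_\varphi(s) = \zeta(s-1) - \zeta(s)\bigl(1 + F(s)\bigr)$ and cancelling the common $\zeta(s-1)$ term, I obtain the closed form
\begin{equation*}
D_\varphi(s) = \frac{s\zeta(s)^2}{\zeta(s+1)} - \zeta(s) - \zeta(s) R(s),
\end{equation*}
which is manifestly meromorphic for $\sigma > 0$.

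From this representation the pole structure is immediate. On $\{\sigma > 0\}$, $\zeta(s)$ has only the simple pole at $s=1$, while $\zeta(s+1)$ is analytic and nonvanishing (since $\sigma+1 > 1$) and $R(s)$ is holomorphic. Therefore the only possible pole is at $s=1$, and it is contributed by the factor $\zeta(s)^2$ in the first term, giving order at most $2$. The principal-part coefficient
$\lim_{s \to 1}(s-1)^2 \cdot \tfrac{s\zeta(s)^2}{\zeta(s+1)} = \tfrac{1}{\zeta(2)}$
is nonzero, so the pole has order exactly $2$, and the other two terms contribute only simple poles, fixing the assertion that $D_\varphi$ has a single pole of order $2$ in $\sigma > 0$.

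To extract the residue I would Laurent-expand using $\zeta(s) = (s-1)^{-1} + \gamma + O(s-1)$, so that $s\zeta(s) = (s-1)^{-1} + (1+\gamma) + O(s-1)$, together with $1/\zeta(s+1) = 1/\zeta(2) - (\zeta'(2)/\zeta(2)^2)(s-1) + O((s-1)^2)$, and then collect the coefficient of $(s-1)^{-1}$ in the product $\zeta(s)\bigl[\tfrac{s\zeta(s)}{\zeta(s+1)} - 1 - R(s)\bigr]$. The one nonroutine evaluation is $R(s)$ at $s=1$: the summand simplifies via the telescoping identity
\begin{equation*}
\frac{1}{m+1} - \frac{1}{m} + \frac{1}{m^2} = \frac{1}{m^2(m+1)},
\end{equation*}
so $R(1) = \sum_{n \geq 1}\varphi(n)/\bigl[n^2(n+1)\bigr]$, precisely the sum appearing in the stated residue. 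The main obstacle is simply the bookkeeping in the Laurent expansion — in particular, tracking the constant contribution from $s\zeta(s)$ together with the $\zeta'(2)$ contribution from $1/\zeta(s+1)$, and finally adding the cross-term $\gamma \cdot \mathrm{Res}_{s=1}\bigl[\tfrac{s\zeta(s)}{\zeta(s+1)} - 1 - R(s)\bigr]$ from the simple pole of $\zeta(s)$ paired with the constant term of the bracket.
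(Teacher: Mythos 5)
Your derivation of the closed form $D_\varphi(s)=\frac{s\zeta(s)^2}{\zeta(s+1)}-\zeta(s)-\zeta(s)R(s)$ is correct, and it is in substance the same representation the paper reaches via Newton's generalized binomial theorem: your $R(s)$ is exactly the paper's double sum $\sum_{n}\sum_{k\geq 2}(-1)^k\binom{s+k-1}{k}\varphi(n)n^{-s-k}$ (with the $n=1$ term kept inside rather than split off). Your route is cleaner in one respect: the Taylor-remainder bound $|(m+1)^{-s}-m^{-s}+sm^{-s-1}|\ll |s(s+1)|m^{-\sigma-2}$ together with $\varphi(m)\leq m$ gives holomorphy of $R$ on $\sigma>0$ directly, so a single formula settles both the meromorphic continuation and the order-$2$ pole at $s=1$, whereas the paper needs a separate Stieltjes-integral representation (via its Lemma on $\sum_{n\leq x}\varphi(n)n^{-s}$) just to establish meromorphy. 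The identification $R(1)=\sum_{n\geq 1}\varphi(n)/\bigl(n^2(n+1)\bigr)$ is also right.

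The gap is in the step you defer as ``bookkeeping'': carried out, it does not produce the stated constant. With $\zeta(s)=(s-1)^{-1}+\gamma+O(s-1)$ one gets $(s-1)^2\zeta(s)^2=1+2\gamma(s-1)+O((s-1)^2)$, hence $(s-1)^2D_\varphi(s)=\frac{s}{\zeta(s+1)}\bigl(1+2\gamma(s-1)\bigr)-(s-1)\bigl(1+R(s)\bigr)+O\bigl((s-1)^2\bigr)$, and differentiating at $s=1$ gives the residue $\frac{\zeta(2)-\zeta'(2)}{\zeta^2(2)}+\frac{2\gamma}{\zeta(2)}-1-\sum_{n\geq 1}\frac{\varphi(n)}{n^2(n+1)}$. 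The extra $2\gamma/\zeta(2)$ is exactly what your own outline generates --- one $\gamma/\zeta(2)$ from the constant term of $s\zeta(s)=(s-1)^{-1}+(1+\gamma)+\cdots$ inside the bracket, and one from the constant term $\gamma$ of the outer $\zeta(s)$ multiplied by the bracket's residue $1/\zeta(2)$ --- and it cancels against nothing. (Minor wording point: the latter contribution comes from the constant term of $\zeta(s)$ paired with the pole of the bracket, not from ``the simple pole of $\zeta(s)$ paired with the constant term of the bracket''; that pairing gives $1\cdot b_0$, and both cross terms must be retained.) The paper loses this term by writing $\zeta(s)^2=(s-1)^{-2}+2h(s)(s-1)^{-1}+h(s)^2$ and absorbing the middle piece $\frac{2s\,h(s)}{\zeta(s+1)(s-1)}$, which has a simple pole at $s=1$ with residue $2\gamma/\zeta(2)\neq 0$, into a function $L(s)$ that it declares analytic. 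So to close your proof you must either exhibit a cancellation of the $2\gamma/\zeta(2)$ term (there is none) or observe that the displayed residue in the statement is off by $2\gamma/\zeta(2)$; as written, your proposal establishes the meromorphy and the single double pole, but not the claimed value of the residue.
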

To prove Theorem \ref{tm1}, we need the following result (see \cite{Apo}).
\begin{lem}\label{Apo}
For $x\geq 2$, $\sigma>1$ and $s\neq 2$, we have
\begin{eqnarray}
\sum_{n\leq x}\dfrac{\varphi(n)}{n^s}=\dfrac{x^{2-s}}{2-s}\dfrac{1}{\zeta(2)}+\dfrac{\zeta(s-1)}{\zeta(s)}+O(x^{1-\sigma}\log(x)).
\end{eqnarray}
\end{lem}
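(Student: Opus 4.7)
The plan is to apply Abel's summation formula to the classical asymptotic (\ref{ES}), namely $A(x):=\sum_{n\le x}\varphi(n)=\tfrac{x^2}{2\zeta(2)}+E(x)$ with $E(x)=O(x\log x)$, and then identify the constant that emerges as $\zeta(s-1)/\zeta(s)$ via analytic continuation from the region of absolute convergence. With $f(t)=t^{-s}$, Abel summation gives
\begin{equation*}
\sum_{n\le x}\frac{\varphi(n)}{n^s}=\frac{A(x)}{x^s}+s\int_1^x\frac{A(t)}{t^{s+1}}\,dt.
\end{equation*}

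Plugging in $A(t)=\tfrac{t^2}{2\zeta(2)}+E(t)$, the ``main part'' of the right-hand side contributes
\begin{equation*}
\frac{x^{2-s}}{2\zeta(2)}+\frac{s}{2\zeta(2)}\cdot\frac{x^{2-s}-1}{2-s}=\frac{x^{2-s}}{(2-s)\zeta(2)}-\frac{s}{2\zeta(2)(2-s)},
\end{equation*}
where the hypothesis $s\neq 2$ is exactly what allows the elementary evaluation $\int_1^x t^{1-s}\,dt=(x^{2-s}-1)/(2-s)$. The boundary contribution $E(x)/x^s$ is trivially $O(x^{1-\sigma}\log x)$. For the integrated error, I would split at infinity, writing
\begin{equation*}
s\int_1^x\frac{E(t)}{t^{s+1}}\,dt=s\int_1^\infty\frac{E(t)}{t^{s+1}}\,dt-s\int_x^\infty\frac{E(t)}{t^{s+1}}\,dt;
\end{equation*}
since $\sigma>1$ and $E(t)\ll t\log t$, the first integral converges absolutely to a constant $C_0(s)$ independent of $x$, while the tail is bounded by $|s|\int_x^\infty (\log t)\,t^{-\sigma}\,dt\ll x^{1-\sigma}\log x$ by a standard integration by parts. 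Collecting the pieces yields
\begin{equation*}
\sum_{n\le x}\frac{\varphi(n)}{n^s}=\frac{x^{2-s}}{(2-s)\zeta(2)}+C(s)+O\!\left(x^{1-\sigma}\log x\right),\qquad C(s):=-\frac{s}{2\zeta(2)(2-s)}+sC_0(s).
\end{equation*}

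It remains to identify $C(s)$ with $\zeta(s-1)/\zeta(s)$. For $\sigma>2$ the Dirichlet series $\sum_{n\geq 1}\varphi(n)/n^s$ converges absolutely to $\zeta(s-1)/\zeta(s)$ (a classical consequence of $\varphi=\mu*\mathrm{id}$), so letting $x\to\infty$ in the display above — the main term $x^{2-s}/((2-s)\zeta(2))$ now vanishes — forces $C(s)=\zeta(s-1)/\zeta(s)$ for every $s$ with $\sigma>2$. Both $C(s)$ and $\zeta(s-1)/\zeta(s)$ are meromorphic on the half-plane $\sigma>1$: the former is the sum of a rational function with a simple pole only at $s=2$ and of an integral that converges uniformly on compact subsets (hence defines an analytic function) of $\sigma>1$; the latter is holomorphic for $\sigma>1$ except at $s=2$, since $\zeta$ has no zeros for $\sigma>1$. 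The identity theorem then extends $C(s)=\zeta(s-1)/\zeta(s)$ to all of $\{\sigma>1,\,s\ne 2\}$, finishing the proof.

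The only real subtlety is the last step — namely, transferring the evaluation of the constant from the domain of absolute convergence $\sigma>2$ into the strip $1<\sigma\le 2$ where the defining Dirichlet series diverges. This is handled cleanly by the analyticity of $C(s)$ in $\sigma>1$, which in turn relies on the uniform absolute convergence of $\int_1^\infty E(t)\,t^{-s-1}\,dt$ on compact subsets of that half-plane. The rest is routine bookkeeping; one should note that the implicit constant in the $O(x^{1-\sigma}\log x)$ error does depend on $s$, blowing up as $\sigma\downarrow 1$ or as $s\to 2$, but this is consistent with the formulation of the lemma.
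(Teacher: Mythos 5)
Your proof is correct, but it takes a genuinely different route from the one the paper intends (the paper only sketches its proof in the remark following the lemma, citing partial summation together with $\sum_{n\geq 1}\mu(n)n^{-s}=1/\zeta(s)$; this is the classical argument from Apostol). That argument opens up the convolution $\varphi=\mu*\mathrm{id}$ to write
\[
\sum_{n\le x}\frac{\varphi(n)}{n^s}=\sum_{d\le x}\frac{\mu(d)}{d^s}\sum_{q\le x/d}q^{1-s},
\]
inserts the Euler--Maclaurin estimate $\sum_{q\le y}q^{1-s}=\frac{y^{2-s}}{2-s}+\zeta(s-1)+O(y^{1-\sigma})$, and then uses $\sum_{d\le x}\mu(d)d^{-2}=1/\zeta(2)+O(1/x)$ and $\sum_{d\le x}\mu(d)d^{-s}=1/\zeta(s)+O(x^{1-\sigma})$, the harmonic sum over $d$ in the error producing the factor $\log x$. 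There the constant $\zeta(s-1)/\zeta(s)$ appears directly, valid at once throughout $\sigma>1$, $s\neq 2$, with no continuation argument. You instead apply Abel summation to the summatory function of $\varphi$, quoting (\ref{ES}) as a black box, and must then identify the constant $C(s)$; since the Dirichlet series diverges for $1<\sigma\le 2$, you are forced into the analytic-continuation step, which you carry out correctly (local uniform convergence of $\int_1^\infty E(t)t^{-s-1}\,dt$ on compact subsets of $\sigma>1$, then the identity theorem on the connected domain $\{\sigma>1\}\setminus\{2\}$ after matching with $\zeta(s-1)/\zeta(s)$ for $\sigma>2$). What your approach buys is that it never touches M\"obius sums and recycles an estimate already stated in the paper; what it costs is the extra continuation step. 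Both yield the stated $O(x^{1-\sigma}\log x)$ with an implied constant depending on $s$, which, as you rightly note, is all the lemma claims.
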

\begin{remark}
The proof of Lemma \emph{\ref{Apo}} follows using partial summation and the fact that 
\begin{eqnarray*}
\sum_{n=1}^\infty \dfrac{\mu(n)}{n^s}=\dfrac{1}{\zeta(s)}, \;\sigma>1.
\end{eqnarray*}
\end{remark}
\begin{proof}[Proof of Theorem \emph{\ref{tm1}}]
We rewrite $G_\varphi(s)$ after a change of variable $n\rightarrow n+1$ as
\begin{eqnarray}\label{Stint1}
G_\varphi(s)=1+\sum_{n=1}^\infty\dfrac{\varphi(n)}{(n+1)^s}.
\end{eqnarray}
Next, apply Steiljes integration as follows:
\begin{eqnarray}\label{Stint2}
G_\varphi(s)-1=\int_{1^{-}}^\infty (1+1/u)^{-s}d(A_\varphi(u))
\end{eqnarray}
where 
\begin{eqnarray*}
A_\varphi(u)=\sum_{n\leq u}\dfrac{\varphi(n)}{n^s}.
\end{eqnarray*}
In (\ref{Stint2}), we apply integration by parts along with Lemma \ref{Apo} to obtain
\begin{eqnarray}\label{Stint3}
G_\varphi(s)-1&=&\left.A_{\varphi}(u)(1+1/u)^{-s}\right\rvert_{1^-}^\infty-s\int_{1}^\infty \dfrac{A_\varphi(u)\;du}{u^2(1+1/u)^{s+1}}\notag\\
&=&\sum_{n=1}^\infty \dfrac{\varphi(n)}{n^s}-s(I_1(s)+I_2(s)+I_3(s))
\end{eqnarray}
where 
\begin{eqnarray}\label{inte}
I_1(s)&=&\dfrac{1}{\zeta(2)(2-s)}\int_{1}^\infty \dfrac{du}{u^s(1+1/u)^{s+1}}\notag\\
I_2(s)&=&\dfrac{\zeta(s-1)}{\zeta(s)}\int_{1}^\infty \dfrac{du}{u^2(1+1/u)^{s+1}}\notag\\
I_3(s)&\ll &\int_{1}^\infty \dfrac{\log(u)\;du}{u^{s+1}(1+1/u)^{s+1}}
\end{eqnarray}
We shall now estimate each of the integrals $I_1(s), I_2(s)$ and $I_3(s)$ in (\ref{inte}). We start with $I_1(s)$. 
\begin{eqnarray}\label{inte11}
I_1(s)=\dfrac{1}{\zeta(2)(2-s)}\int_{1}^\infty \dfrac{u\;du}{(u+1)^{s+1}}.
\end{eqnarray}
Making the substitution $v=1+u$ in (\ref{inte11}) and after a simple calculation, we obtain
\begin{eqnarray}\label{inte1}
I_1(s)=\dfrac{(s+1)}{2^s\zeta(2)s(s-1)(2-s)}.
\end{eqnarray}
To evaluate $I_2(s)$, we make the substitution that $v=1+1/u$ to obtain
\begin{eqnarray}\label{inte2}
I_2(s)&=&\dfrac{\zeta(s-1)}{\zeta(s)}\int_1^2\dfrac{dv}{v^{s+1}}\notag\\
&=&\dfrac{\zeta(s-1)}{s\zeta(s)}(1-2^{-s}).
\end{eqnarray}
For $I_3(s)$, we use integration by parts to obtain
\begin{eqnarray}\label{inte3}
sI_3(s)\ll \int_{1}^\infty \dfrac{1}{u(u+1)^s}<\infty, \;\;\sigma>0.
\end{eqnarray}
Thus from (\ref{Dphi}), (\ref{Stint3}), (\ref{inte1}) and (\ref{inte2}), it follows that 
\begin{eqnarray}\label{Dphi1}
D_\varphi(s)&=&\dfrac{(s+1)\zeta(s)}{2^s\zeta(2)(s-1)(2-s)}-\zeta(s)+\zeta(s-1)(1-2^{-s})\notag\\&&+\;s\zeta(s)I_3(s).
\end{eqnarray}
It is clearly seen that the right-hand side in (\ref{Dphi1}) has a pole at $s=1$ of order $2$ due to the factor $\zeta(s)/(s-1)$. Owing to the appearance of $\zeta(s-1)$, it is also tempting to think that the right-hand side of (\ref{Dphi1}) might also have a pole at $s=2$ which is simple. However, if we compute the residue at $s=2$ from the first expression in the right hand side of (\ref{Dphi1}), we get a contribution of $-3/4$ and computing similarly for the third expression in the right-hand side in (\ref{Dphi1}) gives us a contribution of $3/4$, combining which gives us a residue of $0$ at $s=2$. Thus $s=2$ is not a pole for the function $D_{\varphi}(s)$. We thus separate out the terms containing the factor $1/(s-2)$ and group them together (using partial fraction decomposition) to rewrite (\ref{Dphi1}) as follows
\begin{eqnarray}\label{Dnewphi}
D_\varphi(s)&=&\dfrac{\zeta(s)}{2^{s-1}\zeta(2)(s-1)}-\zeta(s)+\zeta(s-1)(1-2^{-s})-\dfrac{3\zeta(s)}{2^s\zeta(2)(s-2)}\notag\\&&+\;s\zeta(s)I_3(s).
\end{eqnarray}  
Finally, we conisder the last expression in the right-hand side of (\ref{Dphi1}) and  it is clear that $s=1$ is a simple pole of it since $sI_3(s)$ is analytic for $\sigma>0$ in view of (\ref{inte3}). Thus $s\zeta(s)I_3(s)$ is meromorphic for $\sigma>0$ since $\zeta(s)$ is meromorphic for $\sigma>0$. 

To compute the residue of the pole at $s=1$ for $D_\varphi(s)$, we obtain a slightly different representation of $D_\varphi(s)$. This new representation will be more convenient to deal with. For this, we start with equation (\ref{Dphi}) and make the change of variable $n\rightarrow n+1$. Next, we extract the term $n=1$ from the sum to obtain 
\begin{eqnarray}\label{Dphi2}
D_\varphi(s)&=&\zeta(s-1)-\zeta(s)\sum_{n=0}^\infty\dfrac{\varphi(n)}{(n+1)^s}\notag\\
&=& \zeta(s-1)-\zeta(s)(1+2^{-s})-\zeta(s)\sum_{n=2}^\infty\dfrac{\varphi(n)}{n^s}(1+1/n)^{-s}.
\end{eqnarray}   
We now apply Newton's generalized binomial theorem on the right-hand sum above to get,
\begin{eqnarray}\label{Dpphi1}
&\displaystyle\sum_{n=2}^\infty\dfrac{\varphi(n)}{n^s}(1+1/n)^{-s}=\displaystyle\sum_{n=2}^\infty\displaystyle\sum_{k=0}^\infty(-1)^k\binom{s+k-1}{k}\dfrac{\varphi(n)}{n^{s+k}}&\notag\\
&=\displaystyle \sum_{n=2}^\infty\dfrac{\varphi(n)}{n^s}-s\displaystyle\sum_{n=2}^\infty\dfrac{\varphi(n)}{n^{s+1}}+\displaystyle\sum_{n=2}^\infty\displaystyle\sum_{k=2}^\infty(-1)^k\binom{s+k-1}{k}\dfrac{\varphi(n)}{n^{s+k}}&\notag\\
&=\dfrac{\zeta(s-1)}{\zeta(s)}-1-\dfrac{s\zeta(s)}{\zeta(s+1)}+s+
\displaystyle\sum_{n=2}^\infty\displaystyle\sum_{k=2}^\infty(-1)^k\binom{s+k-1}{k}\dfrac{\varphi(n)}{n^{s+k}}.&
\end{eqnarray}
Combining (\ref{Dphi2}) and (\ref{Dpphi1}) we obtain
\begin{eqnarray}\label{Danophi1}
\ \ \ \ D_{\varphi}(s)=\dfrac{s\zeta^2(s)}{\zeta(s+1)}-\zeta(s)(s+2^{-s})-\zeta(s)\displaystyle\sum_{n=2}^\infty\displaystyle\sum_{k=2}^\infty(-1)^k\binom{s+k-1}{k}\dfrac{\varphi(n)}{n^{s+k}}.
\end{eqnarray}
Comparing (\ref{Dphi1}) and (\ref{Danophi1}), it can be seen readily that the third expression in the right-hand side of (\ref{Danophi1}) is
\begin{eqnarray}\label{moretest}
&\zeta(s)\displaystyle\sum_{n=1}^\infty\displaystyle\sum_{k=2}^\infty(-1)^k\binom{s+k-1}{k}\dfrac{\varphi(n)}{n^{s+k}}=\dfrac{s\zeta^2(s)}{\zeta(s+1)}-\dfrac{(s+1)\zeta(s)}{2^s\zeta(2)(s-1)(2-s)}&\notag\\&-\zeta(s)(s-1+2^{-s})-\;\zeta(s-1)(1-2^{-s})-s\zeta(s)I_3(s),&
\end{eqnarray}
which is meromorphic for $\sigma>0$ with a simple pole at $s=1$. Again, one checks easily that the right-hand side of (\ref{moretest}) does not have any pole at $s=2$. 
We next use (\ref{Danophi1}) to obtain the residue of the double pole of $D_\varphi(s)$ at $s=1$. To this end, we recall the Laurent series expansion of $\zeta(s)$ near $s=1$:
\begin{eqnarray}\label{Lzeta}
\zeta(s)=\dfrac{1}{s-1}+h(s)
\end{eqnarray} 
where $h(s)$ is analytic for $\sigma>0$. Combining (\ref{Danophi1}) and (\ref{Lzeta}), we rewrite $D_\varphi(s)$ as
\begin{eqnarray}\label{Dphirep}
&D_\varphi(s)=\dfrac{s}{\zeta(s+1)(s-1)^2}-\dfrac{s+2^{-s}}{s-1}-\dfrac{1}{s-1}\displaystyle\sum_{n=2}^\infty\displaystyle\sum_{k=2}^\infty(-1)^k\binom{s+k-1}{k}\dfrac{\varphi(n)}{n^{s+k}}\notag &\\&+\;L(s)&
\end{eqnarray}
where $L(s)$ is an analytic function for $\sigma>0$. Hence from (\ref{Dphirep}), it follows that
\begin{eqnarray}
\text{Res}_{s=1}D_\varphi(s)&=&\lim_{s\rightarrow 1}\dfrac{d}{ds}((s-1)^2D_\varphi(s))\notag\\
&=&\lim_{s\rightarrow 1}\dfrac{d}{ds}\left(\dfrac{s}{\zeta(s+1)}\right)-\dfrac{3}{2}-\displaystyle\sum_{n=2}^\infty\displaystyle\sum_{k=2}^\infty(-1)^k\binom{k}{k}\dfrac{\varphi(n)}{n^{k+1}}\notag\\
&=&\dfrac{\zeta(2)-\zeta'(2)}{\zeta^2(2)}-\dfrac{3}{2}-\displaystyle\sum_{n=2}^\infty\displaystyle\sum_{k=2}^\infty(-1)^k\dfrac{\varphi(n)}{n^{k+1}}\notag\\
&=&\dfrac{\zeta(2)-\zeta'(2)}{\zeta^2(2)}-\dfrac{3}{2}-\sum_{n=2}^\infty\varphi(n)\sum_{k=2}^\infty\dfrac{(-1)^k}{n^{k+1}}\notag\\
&=&\dfrac{\zeta(2)-\zeta'(2)}{\zeta^2(2)}-\dfrac{3}{2}-\sum_{n=2}^\infty\varphi(n)\left(\dfrac{1}{n+1}-\dfrac{1}{n}+\dfrac{1}{n^2}\right)\notag\\
&=&\dfrac{\zeta(2)-\zeta'(2)}{\zeta^2(2)}-1-\sum_{n=1}^\infty\dfrac{\varphi(n)}{n^2(n+1)}\approx -0.8343893\cdots.
\end{eqnarray}
\end{proof}\hfill  
\section{Contour integral method}
In this section, we use Cauchy's residue theorem to evaluate the contour integral $T_\varphi^{a}(x)$ in (\ref{Perravg}). This requires a closed contour and for this, we will first truncate the contour $\mathcal{C}$ at a suitable height. We will then make suitable deformation of this truncated contour to the left of the line $\sigma=1$ and make it a rectangular contour. We will need to estimate carefully the errors obtained in the process of trunctations and deformations. In estimating these errors, we require some well-known estimates for $\zeta(s)$ and we state them below (see \cite{Mont}).
\begin{theorem}\label{zerofree}
There is an absolute constant $c>0$ such that $\zeta(s)\neq 0$ for $\sigma>1-c/\log(|t|+4)$.
\end{theorem}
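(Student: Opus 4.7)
The result is the classical de la Vallée Poussin zero-free region, so my plan follows the standard route through the Mertens $3$-$4$-$1$ trigonometric inequality, combined with upper bounds on $\zeta$ just to the right of the line $\sigma=1$.

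First I would start from the Euler product to obtain, for $\sigma>1$,
\[
\log \zeta(s) = \sum_{p}\sum_{m=1}^\infty \frac{1}{m p^{ms}},
\]
which after taking real parts gives
\[
\log|\zeta(\sigma+it)| = \sum_{p,m}\frac{\cos(mt\log p)}{m p^{m\sigma}}.
\]
Feeding this into the elementary nonnegative identity $3+4\cos\theta+\cos 2\theta = 2(1+\cos\theta)^2 \geq 0$ with $\theta = mt\log p$ yields
\[
\zeta(\sigma)^3\,|\zeta(\sigma+it)|^4\,|\zeta(\sigma+2it)| \geq 1, \qquad \sigma>1.
\]

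Next I would record the two standard upper bounds on $\zeta$ needed on and just to the right of $\sigma=1$: firstly, from the Laurent expansion \emph{(\ref{Lzeta})}, one has $\zeta(\sigma)\leq (\sigma-1)^{-1}+O(1)$ as $\sigma\to 1^{+}$; secondly, by splitting the defining series at $n=|t|$ and applying partial summation (or the approximate functional equation), one obtains $|\zeta(\sigma+it)|\ll \log(|t|+4)$ uniformly for $1\leq \sigma\leq 2$ and all $t$. I would then extend this last bound a short distance to the left of $\sigma=1$ by a Cauchy-integral argument on discs of radius $O(1/\log(|t|+4))$, which is where the choice $\log(|t|+4)$ in the denominator originates.

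The main step is then a contradiction argument: assume $\zeta(\beta+i\gamma)=0$ with $|\gamma|\geq 1$ and $\beta$ very close to $1$. Using holomorphicity at $\beta+i\gamma$ together with the logarithmic bound above, one gets $|\zeta(\sigma+i\gamma)| \leq A(\sigma-\beta)\log(|\gamma|+4)$ for $\sigma$ in a small neighbourhood of $\beta$ and some absolute $A$. Substituting this, together with $\zeta(\sigma)\leq (\sigma-1)^{-1}+O(1)$ and $|\zeta(\sigma+2i\gamma)| \ll \log(|\gamma|+4)$, into the $3$-$4$-$1$ inequality gives
\[
1 \;\leq\; \Bigl(\tfrac{1}{\sigma-1}+O(1)\Bigr)^{3} \bigl(A(\sigma-\beta)\log(|\gamma|+4)\bigr)^{4} \cdot C\log(|\gamma|+4).
\]
Choosing $\sigma-1$ to be a small fixed multiple of $1/\log(|\gamma|+4)$ and simplifying with $\sigma-\beta=(\sigma-1)+(1-\beta)$ forces $1-\beta \geq c/\log(|\gamma|+4)$ for some absolute $c>0$, which is exactly the claimed zero-free region. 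The residual range $|\gamma|<1$ is handled by compactness, using the classical fact that $\zeta(s)$ has no zeros on the line $\sigma=1$ and none in any fixed bounded region with $\sigma\geq 1-\delta$; this is what makes the $+4$ in $\log(|t|+4)$ appropriate.

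The main obstacle is the bookkeeping in the last step: getting the upper bound $|\zeta(\sigma+it)|\ll \log(|t|+4)$ uniform in a thin strip slightly to the \emph{left} of $\sigma=1$, and then optimising the choice of $\sigma$ in the $3$-$4$-$1$ inequality so that all constants collapse to a single absolute $c>0$. Since the statement is classical and is invoked here only as a tool borrowed from \cite{Mont}, in the written paper one would simply cite the proof in Montgomery--Vaughan; the sketch above is the standard derivation I would fall back on if asked to reproduce it.
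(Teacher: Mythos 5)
The paper offers no proof of Theorem \ref{zerofree}: it is stated as a classical fact and the reader is referred to \cite{Mont}. Your sketch is the standard de la Vall\'ee Poussin argument via the Mertens $3$-$4$-$1$ inequality, which is precisely the proof in that reference (Montgomery--Vaughan work with $-\operatorname{Re}\bigl(\zeta'/\zeta\bigr)$ rather than $\log|\zeta|$, which avoids some of the bookkeeping you flag in your final step, but the two variants are equivalent in substance), and you correctly observe at the end that one would simply cite it, which is exactly what the paper does.
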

This is the classical zero-free region for $\zeta(s)$. Next we have
\begin{theorem}\label{zetaest}
Let $c$ be the constant in Theorem \emph{\ref{zerofree}}. If $\sigma>1-c/(2\log(|t|+4))$ and $|t|>7/8$, then
\begin{eqnarray}
|\log\zeta(s)|\ll \log\log(|t|+4)+O(1),\notag\\
\dfrac{1}{\zeta(s)}\ll \log(|t|+4).
\end{eqnarray}
On the other hand, if $1-c/(2\log(|t|+4))<\sigma\leq 2$ and $|t|\leq 7/8,$ then $\log(\zeta(s)(s-1))\ll 1$ and $1/\zeta(s)\ll |s-1|$.
\end{theorem}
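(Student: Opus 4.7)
The plan is to derive the bounds by combining the zero-free region (Theorem \ref{zerofree}) with a polynomial upper bound on $|\zeta(s)|$ in the critical strip, and then applying a Borel--Carath\'{e}odory type inequality to convert real-part bounds into modulus bounds for $\log\zeta(s)$. This is a classical package and I would follow the treatment in \cite{Mont}.

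First I would establish the standard estimate $|\zeta(s)|\ll \log(|t|+4)$ uniformly for $\sigma\geq 1-c/\log(|t|+4)$. This comes from the Euler--Maclaurin truncation of the defining Dirichlet series,
\begin{eqnarray*}
\zeta(s)=\sum_{n\leq N}\dfrac{1}{n^s}+\dfrac{N^{1-s}}{s-1}+O\bigl(|s|\,N^{-\sigma}\bigr),
\end{eqnarray*}
evaluated at $N=|t|+1$, and a direct bound on the truncated sum. Since by Theorem \ref{zerofree} $\zeta(s)$ does not vanish in the enlarged region $\sigma>1-c/\log(|t|+4)$, the branch of $\log\zeta(s)$ normalized by $\log\zeta(s)\to 0$ as $\sigma\to\infty$ is holomorphic there, and its real part satisfies $\mathrm{Re}\log\zeta(s)=\log|\zeta(s)|\ll\log\log(|t|+4)+O(1)$ throughout this region.

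Next, fixing $|t|>7/8$, I would apply the Borel--Carath\'{e}odory inequality to $\log\zeta(s)$ on two concentric disks centered at $s_{0}=2+it$: an outer disk of radius $r$ slightly larger than $1+c/(2\log(|t|+4))$ that lies within the zero-free region (so the real-part bound just derived applies) and an inner disk of radius $r'<r$ covering the target strip. The conclusion gives $|\log\zeta(s)|\ll\log\log(|t|+4)+O(1)$ on the inner disk, which is the first claimed estimate. The second estimate then follows by exponentiation, since
\begin{eqnarray*}
\left|\dfrac{1}{\zeta(s)}\right|=\exp(-\mathrm{Re}\log\zeta(s))\leq\exp|\log\zeta(s)|\ll\log(|t|+4).
\end{eqnarray*}

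For the complementary range $|t|\leq 7/8$, the function $(s-1)\zeta(s)$ is holomorphic and non-vanishing in a fixed neighborhood of $s=1$ (using the zero-free region to rule out other zeros and the Laurent expansion (\ref{Lzeta}) to handle the pole), so $\log((s-1)\zeta(s))$ is holomorphic and bounded there by a compactness argument; the estimate $1/\zeta(s)\ll|s-1|$ follows directly from $\zeta(s)=1/(s-1)+h(s)$ with $h$ analytic. The principal obstacle is careful bookkeeping in the Borel--Carath\'{e}odory step: one must choose the radii $r$ and $r'$ so that the resulting modulus bound applies on exactly the strip $\sigma>1-c/(2\log(|t|+4))$ and so that the loss factor $r/(r-r')$ only contributes $O(1)$ rather than dependence on $t$. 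Once these radii are calibrated against the same constant $c$ that appears in Theorem \ref{zerofree}, the rest is routine.
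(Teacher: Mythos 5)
The paper does not actually prove this statement: both Theorem \ref{zerofree} and Theorem \ref{zetaest} are quoted from \cite{Mont} with no internal argument, so the only question is whether your sketch would work as a free-standing proof. It has one genuine quantitative gap, in the Borel--Carath\'eodory step. With disks centered at $s_0=2+it$, the outer radius $r$ is forced to satisfy $r<1+c/\log(|t|+4)$ in order to remain inside the zero-free region, while the inner radius $r'$ must satisfy $r'\geq 1+c/(2\log(|t|+4))$ in order to reach the target strip $\sigma>1-c/(2\log(|t|+4))$. Thus $r-r'\ll 1/\log(|t|+4)$ while $r,r'\asymp 1$, and the Borel--Carath\'eodory loss factor $2r'/(r-r')$ is therefore of order $\log(|t|+4)$, not $O(1)$. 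No ``calibration of $r,r'$ against $c$'' can repair this, because the obstruction is structural: both radii are pinned to an interval of length $\ll 1/\log(|t|+4)$ around $1$, so their difference cannot be of order $1$. The bound your setup yields is $|\log\zeta(s)|\ll\log(|t|+4)\log\log(|t|+4)$, which is too weak.

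The standard remedy, and what \cite{Mont} does, is to center the disks at a point close to the line $\sigma=1$, e.g.\ $s_0=1+1/\log(|t|+4)+it$. Then the inner radius $r'\approx(1+c/2)/\log(|t|+4)$ reaching $\sigma=1-c/(2\log(|t|+4))$ and the outer radius $R\approx(1+c)/\log(|t|+4)$ staying inside the zero-free region are \emph{both} of order $1/\log(|t|+4)$, so $2r'/(R-r')\approx(4+2c)/c=O(1)$. One also needs the anchor value $|\log\zeta(s_0)|\ll\log\log(|t|+4)$, which comes directly from the Dirichlet series for $\log\zeta$ at $\sigma=1+1/\log(|t|+4)$ (not from the bound on $|\zeta|$ in the strip). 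A minor additional care: since the disk's points have imaginary part varying by $O(1/\log(|t|+4))$, the zero-free constant $c$ must be slightly shrunk to ensure $\zeta$ is nonvanishing on the whole outer disk. With this corrected choice of center your outline goes through, and your treatment of the complementary range $|t|\leq 7/8$ via the Laurent expansion at $s=1$ is correct.
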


Consider
\begin{eqnarray}\label{Perr1}
T_\varphi^{a}(x)=\dfrac{1}{2\pi i}\int_{\mathcal{C}}D_\varphi(s)\dfrac{x^s}{s(s+1)}ds.
\end{eqnarray}
Using (\ref{Dnewphi}) in (\ref{Perr1}), we obtain
\begin{eqnarray}\label{Tphi}
T_\varphi^{a}(x)=J_1(x)-J_2(x)+J_3(x)+J_4(x)
\end{eqnarray}
where 
\begin{eqnarray}\label{someint}
J_1(x)&=&\dfrac{1}{2\pi i}\int_{\mathcal{C}}\dfrac{\zeta(s)}{2^{s-1}\zeta(2)(s-1)}\dfrac{x^s}{s(s+1)}ds,\notag\\
J_2(x)&=&\dfrac{1}{2\pi i}\int_{\mathcal{C}}\zeta(s)\dfrac{x^s}{s(s+1)}ds,\notag\\
J_3(x)&=&\dfrac{1}{2\pi i}\int_{\mathcal{C}}\left(\zeta(s-1)(1-2^{-s})-\dfrac{3\zeta(s)}{2^s\zeta(2)(s-2)}\right)\dfrac{x^s}{s(s+1)}ds,\notag\\
J_4(x)&=&\dfrac{1}{2\pi i}\int_{\mathcal{C}}s\zeta(s)I_3(s)\dfrac{x^s}{s(s+1)}ds.
\end{eqnarray}
It only remains to estimate the integrals $J_1(x), J_2(x), J_3(x),$ and $J_4(x)$ and this involves a careful analysis of the integrands involved. We do this next.
\subsection{Estimation of $J_1(x), J_2(x), J_4(x)$}
In this section, we estimate $J_1(x), J_2(x)$ and $J_4(x)$. Our first task is to show that we can shift the line $\mathcal{C}$ to the left of the point $s=2$. Let $1<\sigma'<2$. Consider the rectangle $\mathcal{R}_1$ with vertices $\sigma-iT', \sigma+iT', \sigma'+iT'$ and $\sigma'-iT'$ as shown below. Since the integrands in $J_1(x), J_2(x)$ and $J_4(x)$ are analytic inside and on the rectangle $\mathcal{R}_1$, these integrals are zero around $\mathcal{R}_1$. We now show that the the contribution of each of the integrals $J_1(x), J_2(x)$ and $J_4(x)$ on the horizontal segments tend to zero as $T'\rightarrow\infty$.\newline
   \begin{tikzpicture}
[decoration={markings,
    mark=at position 1cm   with {\arrow[line width=1pt]{stealth}},
    mark=at position 4.5cm with {\arrow[line width=1pt]{stealth}},
    mark=at position 7cm   with {\arrow[line width=1pt]{stealth}},
    mark=at position 9.5cm with {\arrow[line width=1pt]{stealth}},
    mark=at position 13.5cm with {\arrow[line width=1pt]{stealth}},
    mark=at position 15.5cm with {\arrow[line width=1pt]{stealth}}
  }]
\draw[thick, <->] (-4,0) -- (6,0) coordinate (xaxis);

  \draw[thick, <->] (0,-4) -- (0,5) coordinate (yaxis);

  \node[above] at (xaxis) {$\sigma$};

  \node[right]  at (yaxis) {$t$};
\path[draw,blue, line width=0.8pt, postaction=decorate] (4.9,-3)
    -- node[midway, above right, black] {} (4.9,3)
    -- node[midway, below, black]{$\gamma_1$} (1.5,3)
    -- node[midway, above right, black] {} (1.5,-3) 
    -- node[midway, above, black] {$\gamma_2$}(4.9,-3);
\draw (1,0) node [circle, inner sep=2pt, fill]{};
\draw (1,-0.3) node{$s=1$};
\draw (4,0) node [circle, inner sep=2pt, fill]{};
\draw (4,-0.3) node{$s=2$};
\draw (4.9,3.2) node{$\tiny{\sigma+iT'}$};
\draw (1.5,3.2) node{$\tiny{\sigma'+iT'}$};
\draw (1.5,-3.2) node{$\tiny{\sigma'-iT'}$};
\draw (4.9,-3.2) node{$\tiny{\sigma-iT'}$};
\draw (2.8,1.7) node{$\mathcal{R}_1$};
\end{tikzpicture}\newline
From Theorem \ref{zetaest} we have
\begin{eqnarray}\label{Hor}
\dfrac{1}{2\pi i}\int_{\gamma_1\cup\gamma_2}\dfrac{\zeta(s)}{2^{s-1}\zeta(2)(s-1)}\dfrac{x^s}{s(s+1)}ds&\ll &\dfrac{x^{\sigma}\log T'}{T'^3}(\sigma-\sigma')\notag\\
\dfrac{1}{2\pi i}\int_{\gamma_1\cup\gamma_2}\zeta(s)\dfrac{x^s}{s(s+1)}ds&\ll &\dfrac{x^\sigma\log T'}{T'^2}(\sigma-\sigma')\notag\\
\dfrac{1}{2\pi i}\int_{\gamma_1\cup\gamma_2}s\zeta(s)I_3(s)\dfrac{x^s}{s(s+1)}ds&\ll &\dfrac{x^\sigma\log T'}{T'^2}(\sigma-\sigma').
\end{eqnarray}
Therefore the integrals $J_1(x), J_2(x)$ and $J_4(x)$ along horizontal segments $\gamma_1\cup\gamma_2$ tend to zero as $T'\rightarrow\infty$ and we have
\begin{eqnarray}
J_1(x)&=&\dfrac{1}{2\pi i}\int_{\mathcal{C}'}\dfrac{\zeta(s)}{2^{s-1}\zeta(2)(s-1)}\dfrac{x^s}{s(s+1)}ds\notag\\
J_2(x)&=&\dfrac{1}{2\pi i}\int_{\mathcal{C}'}\zeta(s)\dfrac{x^s}{s(s+1)}ds\notag\\
J_4(x)&=&\dfrac{1}{2\pi i}\int_{\mathcal{C}'}s\zeta(s)I_3(s)\dfrac{x^s}{s(s+1)}ds
\end{eqnarray}
where $\mathcal{C}'=[\sigma'-i\infty, \sigma'+i\infty]$. 

Next, we truncate the contour $\mathcal{C}'$ at a height $1\leq T\leq x$ (to be chosen later). Let $\mathcal{C}'_T=[\sigma'\pm iT, \sigma'\pm i\infty]$. We choose 
\begin{eqnarray}\label{parre}
\sigma'=1+1/\log x\notag\\ 
\sigma_1=1-c/\log T
\end{eqnarray}
where $c$ is a small positive constant. Next, deform the contour $C_1'=[\sigma'-iT,\sigma'+iT]$ to a rectangular contour $\mathcal{R}'$ that consists of line segments $C_1', C_2', C_3'$ and $C_4'$ joining the points $\sigma'-iT, \sigma'+iT, \sigma_1+iT$ and $\sigma_1-iT$ as shown below.\newline
\begin{tikzpicture}
[decoration={markings,
    mark=at position 1cm   with {\arrow[line width=1pt]{stealth}},
    mark=at position 4.5cm with {\arrow[line width=1pt]{stealth}},
    mark=at position 7cm   with {\arrow[line width=1pt]{stealth}},
    mark=at position 9.5cm with {\arrow[line width=1pt]{stealth}},
    mark=at position 13.5cm with {\arrow[line width=1pt]{stealth}},
    mark=at position 15.5cm with {\arrow[line width=1pt]{stealth}}
  }]
\draw[thick, <->] (-4,0) -- (6,0) coordinate (xaxis);

  \draw[thick, <->] (0,-4) -- (0,5) coordinate (yaxis);

  \node[above] at (xaxis) {$\sigma$};

  \node[right]  at (yaxis) {$t$};
\path[draw,blue, line width=0.8pt, postaction=decorate] (3.9,-3)
    -- node[midway, above right, black] {$C_1'$} (3.9,3)
    -- node[midway, below, black] {$C_2'$}(0.6,3)
    -- node[midway, above right, black] {$C_3'$} (0.6,-3) 
    -- node[midway, above, black] {$C_4'$}(3.9,-3);
\draw (2,0) node [circle, inner sep=2pt, fill]{};
\draw (2,-0.3) node{$s=1$};
\draw (3.9,3.2) node{$\tiny{\sigma'+iT}$};
\draw (0.6,3.2) node{$\tiny{\sigma_1+iT}$};
\draw (0.6,-3.2) node{$\tiny{\sigma_1-iT}$};
\draw (3.9,-3.2) node{$\tiny{\sigma'-iT}$};
\draw (1.8,1.7) node{$\mathcal{R}'$};
\end{tikzpicture}\newline
Since $s=1$ is a double pole of the integrand in $J_1(x)$, using Cauchy's residue theorem we get
\begin{eqnarray}\label{res1}
\dfrac{1}{2\pi i}\int_{\mathcal{R}'}\dfrac{\zeta(s)}{2^{s-1}\zeta(2)(s-1)}\dfrac{x^s}{s(s+1)}ds=\left.\dfrac{d}{ds}\left(\dfrac{(s-1)^2\zeta(s)x^s}{2^{s-1}\zeta(2)(s-1)s(s+1)}\right)\right\rvert_{s=1}.
\end{eqnarray}
At this point we use the Laurent series expansion (\ref{Lzeta}) for $\zeta(s)$ near $s=1$ and the Taylor series expansion for $x^s/(2^{s-1}\zeta(2)s(s+1))$ near $s=1$ to obtain
\begin{eqnarray}\label{res2}
(s-1)\zeta(s)&=&1+(s-1)h(s)\notag\\
\dfrac{x^s}{2^{s-1}\zeta(2)s(s+1)}&=&\dfrac{x}{2\zeta(2)}+(s-1)\left(\dfrac{x\log x}{2\zeta(2)}-\dfrac{x(3+2\log 2)}{4\zeta(2)}\right)+O_x(|s-1|^2).
\end{eqnarray}
Thus from (\ref{res1}) and (\ref{res2}) we get
\begin{eqnarray}\label{resf}
\dfrac{1}{2\pi i}\int_{\mathcal{R}'}\dfrac{\zeta(s)}{2^{s-1}\zeta(2)(s-1)}\dfrac{x^s}{s(s+1)}ds=\dfrac{x\log x}{2\zeta(2)}+\kappa x
\end{eqnarray}
where 
\begin{eqnarray}
\kappa=\dfrac{h(1)}{2\zeta(2)}-\dfrac{(3+2\log 2)}{4\zeta(2)}.
\end{eqnarray}
For $J_2(x)$ and $J_4(x)$, we note that $s=1$ is a simple pole for each of the integrands. Thus a simple application of Cauchy's residue theorem yields
\begin{eqnarray}\label{resfd1}
\dfrac{1}{2\pi i}\int_{\mathcal{R}'}\zeta(s)\dfrac{x^s}{s(s+1)}ds&=&\dfrac{x}{2}\notag\\
\dfrac{1}{2\pi i}\int_{\mathcal{R}'}s\zeta(s)I_3(s)\dfrac{x^s}{s(s+1)}ds&=&\dfrac{xI_3(1)}{2}.
\end{eqnarray}
We next estimate the integrals $J_1(x), J_2(x)$ and $J_4(x)$ in each of the segments $C_T', C_2', C_3'$ and $C_4'$. 
\subsubsection{\underline{Estimation of $J_1(x)$}}
Using Theorem \ref{zetaest}, we easily see that
\begin{eqnarray}\label{e2}
J_1(x)-\dfrac{1}{2\pi i}\int_{\sigma'-iT}^{\sigma'+iT}\dfrac{\zeta(s)}{2^{s-1}\zeta(2)(s-1)}\dfrac{x^s}{s(s+1)}\ll \dfrac{x\log T}{T^2}.
\end{eqnarray}
and the contribution of the integral $J_1(x)$ on the contours $C_2', C_3'$ and $C_4'$ can be estimated again using Theorem \ref{zetaest} and we get
\begin{eqnarray}\label{e7}
\dfrac{1}{2\pi i}\int_{C_2'\cup C_4'}\dfrac{\zeta(s)}{2^{s-1}\zeta(2)(s-1)}\dfrac{x^s}{s(s+1)}ds&\ll &\dfrac{x^{\sigma'}\log T}{T^3}(\sigma'-\sigma_1)\ll \dfrac{x}{T^3}\notag\\
\dfrac{1}{2\pi i}\int_{C_3'}\dfrac{\zeta(s)}{2^{s-1}\zeta(2)(s-1)}\dfrac{x^s}{s(s+1)}ds &\ll &
x^{\sigma_1}(\log T)\int_{-T}^T\dfrac{dt}{(1+|t|)^3}\notag\\&&+\;x^{\sigma_1}\int_{-1}^1\dfrac{dt}{|\sigma_1+it-1|^2}\notag\\
&\ll &\dfrac{x^{\sigma_1}\log T}{T^2}+\dfrac{x^{\sigma_1}}{(1-\sigma_1)^2}\notag\\
&\ll &x^{\sigma_1}(\log T)^2. 
\end{eqnarray}
\subsubsection{\underline{Estimation of $J_2(x)$}}
We again use Theorem \ref{zetaest} to get
\begin{eqnarray}\label{e4}
&&J_2(x)-\dfrac{1}{2\pi i}\int_{\sigma'-iT}^{\sigma'+iT}\zeta(s)\dfrac{x^s}{s(s+1)}ds\ll \dfrac{x\log T}{T}.
\end{eqnarray}
and 
\begin{eqnarray}\label{e5}
\dfrac{1}{2\pi i}\int_{C_2'\cup C_4'}\zeta(s)\dfrac{x^s}{s(s+1)}ds&\ll& \dfrac{\log T}{T^2}x^{\sigma'}(\sigma-\sigma_1)\ll\dfrac{x}{T^2}\notag\\
\dfrac{1}{2\pi i}\int_{C_3'}\zeta(s)\dfrac{x^s}{s(s+1)}ds&\ll & x^{\sigma_1}(\log T)\int_{-T}^T\dfrac{dt}{(1+|t|)^2}+x^{\sigma_1}\int_{-1}^1\dfrac{dt}{|\sigma_1+it-1|}\notag\\
&\ll &\dfrac{x^{\sigma_1}\log T}{T}+\dfrac{x^{\sigma_1}}{1-\sigma_1}\notag\\
&\ll& x^{\sigma_1}\log T.
\end{eqnarray}
\subsubsection{\underline{Estimation of $J_4(x)$}}
Using Theorem \ref{zetaest} we get
\begin{eqnarray}\label{e6p}
J_4(x)-\dfrac{1}{2\pi i}\int_{\sigma-iT}^{\sigma+iT}s\zeta(s)I_3(s)\dfrac{x^s}{s(s+1)}ds\ll \dfrac{x\log T}{T}.
\end{eqnarray}
and
\begin{eqnarray}\label{e6}
\dfrac{1}{2\pi i}\int_{C_2'\cup C_4'}s\zeta(s)I_3(s)\dfrac{x^s}{s(s+1)}ds&\ll& \dfrac{\log T}{T^2}x(\sigma-\sigma_1)\ll\dfrac{x}{T^2}\notag\\
\dfrac{1}{2\pi i}\int_{C_3'}s\zeta(s)I_3(s)\dfrac{x^s}{s(s+1)}ds&\ll & x^{\sigma_1}(\log T)\int_{-T}^T\dfrac{dt}{(1+|t|)^2}+x^{\sigma_1}\int_{-1}^1\dfrac{dt}{|\sigma_1+it-1|}\notag\\
&\ll &\dfrac{x^{\sigma_1}\log T}{T}+\dfrac{x^{\sigma_1}}{1-\sigma_1}\notag\\
&\ll & x^{\sigma_1}\log T. 
\end{eqnarray}
\subsection{Estimation of $J_3(x)$}
The estimation of $J_3(x)$ requires a more careful analysis. The presence of $\zeta(s-1)$ in the integrand in $J_3(x)$ poses difficulties deforming the contour to the left of the line $\sigma=2$. To avoid this process, we rewrite $J_3(x)$  as 
\begin{eqnarray}\label{Jfin}
J_3(x)=J_{3,1}(x)-J_{3,2}(x),
\end{eqnarray}
where
\begin{eqnarray}\label{twointj}
J_{3,1}(x)&=&\dfrac{1}{2\pi i}\int_{\mathcal{C}}\zeta(s-1)(1-2^{-s})\dfrac{x^s}{s(s+1)}ds,\notag\\
J_{3,2}(x)&=&\dfrac{1}{2\pi i}\int_{\mathcal{C}}\dfrac{3\zeta(s)}{2^s\zeta(2)(s-2)}\dfrac{x^s}{s(s+1)}ds.
\end{eqnarray} 
Now we write $J_{3,1}(x)$ as 
\begin{eqnarray}\label{Jest}
J_{3,1}(x)&=&\dfrac{1}{2\pi i}\int_{\mathcal{C}}\zeta(s-1)\dfrac{x^s}{s(s+1)}ds-\dfrac{1}{2\pi i}\int_{\mathcal{C}}\zeta(s-1)\dfrac{(x/2)^s}{s(s+1)}ds.
\end{eqnarray}
Let 
\begin{eqnarray}\label{Jest0}
B(t):=\sum_{n\leq t}n=\dfrac{t^2}{2}+O(t).
\end{eqnarray}
Then it follows from Theorem \ref{Perroni1} and (\ref{Jest}), by choosing $k=1$, that 
\begin{eqnarray}\label{Jest1}
J_{3,1}(x)&=&\sum_{n\leq x}n\left(1-\dfrac{n}{x}\right)-\sum_{n\leq x/2}n\left(1-\dfrac{n}{x/2}\right)\notag\\
&=&\dfrac{1}{x}\int_{1}^xB(t)\;dt-\dfrac{2}{x}\int_{1}^{x/2}B(t)\;dt
\end{eqnarray}
Thus (\ref{Jest0}) and (\ref{Jest1}) yield
\begin{eqnarray}\label{Jest2}
J_{3,1}(x)&=&\dfrac{x^2}{6}-\dfrac{x^2}{24}+O(x)\notag\\
&=& \dfrac{x^2}{8}+O(x).
\end{eqnarray}
Next, we turn to $J_{3,2}(x)$. We first rewrite the integrand in $J_{3,2}(x)$ as
\begin{eqnarray}
\dfrac{3\zeta(s)}{2^s\zeta(2)(s-2)}&=&\dfrac{3(s-1)\zeta(s)}{2^s\zeta(2)}\dfrac{1}{(s-1)(s-2)}\notag\\
&=& \dfrac{3(s-1)\zeta(s)}{2^s\zeta(2)}\left(\dfrac{1}{s-2}-\dfrac{1}{s-1}\right)\notag\\
&=&F_2(s)-F_1(s)
\end{eqnarray}
where
\begin{eqnarray*}
F_1(s)&=& \dfrac{3\zeta(s)}{2^s\zeta(2)}\notag\\
F_2(s)&=&\dfrac{3(s-1)\zeta(s)}{2^s\zeta(2)(s-2)}
\end{eqnarray*}
and thus 
\begin{eqnarray}\label{deco1}
J_{3,2}(x)=K_{2}(x)-K_1(x)
\end{eqnarray}
where 
\begin{eqnarray*}
K_1(x)&=&\dfrac{1}{2\pi i}\int_{\mathcal{C}}F_1(s)\dfrac{x^s}{s(s+1)}ds\notag\\
K_2(x)&=&\dfrac{1}{2\pi i}\int_{\mathcal{C}}F_2(s)\dfrac{x^s}{s(s+1)}ds
\end{eqnarray*}
It is clear that the integrand in $K_1(x)$ has a simple pole at $s=1$ and that in $K_2(x)$ has a simple pole at $s=2$. We now estimate $K_1(x)$ and $K_2(x)$ separately. 
\subsubsection{\underline{Estimation of $K_1(x)$}}
To estimate $K_1(x)$, we first shift the contour $\mathcal{C}$ to the left of the point $s=2$. Let $\sigma'$ be as in (\ref{parre}). Since the integrand in $K_1(x)$  is analytic inside and on the rectangle $\mathcal{R}_1$, we can easily see using Cauchy's residue theorem and Theorem \ref{zetaest} that 
\begin{eqnarray}
\dfrac{1}{2\pi i}\int_{\gamma_1\cup\gamma_2}F_1(s)\dfrac{x^s}{s(s+1)}ds\ll \dfrac{x^\sigma\log T'}{T'^2}\rightarrow 0,\;\;T'\rightarrow\infty
\end{eqnarray}
and thus
\begin{eqnarray}
K_1(x)=\dfrac{1}{2\pi i}\int_{\mathcal{C}'}F_1(s)\dfrac{x^s}{s(s+1)}ds.
\end{eqnarray}
Now we truncate the contour $\mathcal{C}'$ at a height $1\leq T\leq x$ as before and deform this to the rectangle $\mathcal{R}'$. Since $s=1$ is a simple pole of $F_1(s)x^s/s(s+1)$, it follows from Cauchy's residue theorem that 
\begin{eqnarray}\label{k1res}
\dfrac{1}{2\pi i}\int_{\mathcal{R}'}F_1(s)\dfrac{x^s}{s(s+1)}ds=\dfrac{3x}{4\zeta(2)}.
\end{eqnarray}
Using Theorem \ref{zetaest}, it follows easily that
\begin{eqnarray}\label{errk0}
\dfrac{1}{2\pi i}\int_{C_T'}F_1(s)\dfrac{x^s}{s(s+1)}ds \ll \dfrac{x\log T}{T}
\end{eqnarray}
and 
\begin{eqnarray}\label{errk1}
\dfrac{1}{2\pi i}\int_{C_2'\cup C_4'}F_1(s)\dfrac{x^s}{s(s+1)}ds&\ll &\dfrac{x\log T}{T^2}(\sigma'-\sigma_1)\ll\dfrac{x}{T^2}\notag\\
\dfrac{1}{2\pi i}\int_{C_3'}F_1(s)\dfrac{x^s}{s(s+1)}ds&\ll & \dfrac{x^{\sigma_1}\log T}{T}+\dfrac{x^{\sigma_1}}{1-\sigma_1}\notag\\
&\ll & x^{\sigma_1}\log T.  
\end{eqnarray}  
\subsubsection{\underline{Estimation of $K_2(x)$}}
In order to estimate $K_2(x)$, we truncate the contour $\mathcal{C}$ at a height $1\leq W\leq x$ (to be chosen later). Let $C_W=[\sigma\pm i\infty, \sigma\pm iW]$. Choose 
\begin{eqnarray}
\sigma &=& 2+\dfrac{1}{\log x}\notag\\
\sigma_2&= &1+\dfrac{1}{\log x}.
\end{eqnarray}
Now, deform this truncated contour to a rectangle $\mathcal{R}_2$ as shown below. \newline
\begin{tikzpicture}
[decoration={markings,
    mark=at position 1cm   with {\arrow[line width=1pt]{stealth}},
    mark=at position 4.5cm with {\arrow[line width=1pt]{stealth}},
    mark=at position 7cm   with {\arrow[line width=1pt]{stealth}},
    mark=at position 9.5cm with {\arrow[line width=1pt]{stealth}},
    mark=at position 13.5cm with {\arrow[line width=1pt]{stealth}},
    mark=at position 15.5cm with {\arrow[line width=1pt]{stealth}}
  }]
\draw[thick, <->] (-4,0) -- (6,0) coordinate (xaxis);

  \draw[thick, <->] (0,-4) -- (0,5) coordinate (yaxis);

  \node[above] at (xaxis) {$\sigma$};

  \node[right]  at (yaxis) {$t$};
\path[draw,blue, line width=0.8pt, postaction=decorate] (3.9,-3)
    -- node[midway, above right, black] {$\Gamma_1$} (3.9,3)
    -- node[midway, below, black] {$\Gamma_2$}(1.7,3)
    -- node[midway, above right, black] {$\Gamma_3$} (1.7,-3) 
    -- node[midway, above, black] {$\Gamma_4$}(3.9,-3);
\draw (1.2,0) node [circle, inner sep=2pt, fill]{};
\draw (3,0) node [circle, inner sep=2pt, fill]{};
\draw (1.2,-0.3) node{$s=1$};
\draw (3,-0.3) node{$s=2$};
\draw (3.9,3.2) node{$\tiny{\sigma+iW}$};
\draw (1.7,3.2) node{$\tiny{\sigma_2+iW}$};
\draw (1.7,-3.2) node{$\tiny{\sigma_2-iW}$};
\draw (3.9,-3.2) node{$\tiny{\sigma-iW}$};
\draw (2.6,1.7) node{$\mathcal{R}_2$};
\end{tikzpicture}\newline
Since $s=2$ is a simple pole for the integrand in $K_{2}(x)$, using Cauchy's residue theorem we find that 
\begin{eqnarray}\label{Jest3}
\displaystyle\text{Res}_{s=2}\left(\dfrac{x^sF_2(s)}{s(s+1)}\right)=\dfrac{x^2}{8}
\end{eqnarray}
The contribution of the integral $K_{2}(x)$ on the contour $C_W$ can be obtained by using 
\begin{eqnarray*}
|\zeta(s)|\leq \zeta(2),\;\;-\infty<t<\infty
\end{eqnarray*}
and we get
\begin{eqnarray}\label{j321est}
\dfrac{1}{2\pi i}\int_{C_W}F_2(s)\dfrac{x^s}{s(s+1)}&\ll &\dfrac{x^2}{W}.
\end{eqnarray}
To obtain the contribution of $K_2(x)$ on $\Gamma_2$ and $\Gamma_4$ , we use Theorem \ref{zetaest}. We thus get
\begin{eqnarray}\label{j32est}
\dfrac{1}{2\pi i}\int_{\Gamma_2\cup \Gamma_4}F_2(s)\dfrac{x^s}{s(s+1)}&\ll & \dfrac{x^2\log W}{W^2}.
\end{eqnarray}  
To obtain the contribution of the integral on $\Gamma_3$, we again use Theorem \ref{zetaest}, but this time we use the estimate that $\log (\zeta(s)(s-1))\ll 1$ when $|t|$ is small. We thus obtain
\begin{eqnarray}\label{j322est}
\dfrac{1}{2\pi i}\int_{\Gamma_3}F_2(s)\dfrac{x^s}{s(s+1)}&\ll & x^{\sigma_2}\log W\int_{-W}^W\dfrac{dt}{(1+|t|)^2}+x^{\sigma_2}\int_{-1}^1\dfrac{dt}{|\sigma_2-2+it||\sigma_2+it||\sigma_2+1+it|} \notag\\
&\ll & \dfrac{x^{\sigma_2}\log W}{W}+x^{\sigma_2}\ll x.
\end{eqnarray}
Thus it follows from (\ref{deco1}), (\ref{errk0}), (\ref{errk1}), (\ref{k1res}), (\ref{Jest3}), (\ref{j321est}), (\ref{j32est}) and (\ref{j322est}) that
\begin{eqnarray}\label{j32fin}
J_{3,2}(x)&=&\dfrac{x^2}{8}-\dfrac{3x}{4\zeta(2)}+O\left(\dfrac{x\log T}{T}\right)+O\left(x^{\sigma_1}\log T\right)+O\left(\dfrac{x^2}{W}\right)
\end{eqnarray}
and from (\ref{Jfin}), (\ref{Jest2}) and (\ref{j32fin}) we obtain
\begin{eqnarray}\label{j3fin}
J_3(x)=O\left(\dfrac{x\log T}{T}\right)+O\left(x^{\sigma_1}\log T\right)+O\left(\dfrac{x^2}{W}\right)+O(x).
\end{eqnarray}
\section{Proof of Theorem \ref{main}}
From (\ref{Tphi}), (\ref{resf}), (\ref{resfd1}), (\ref{e2}), (\ref{e7}), (\ref{e4}), (\ref{e5}), (\ref{e6p}), (\ref{e6p}) and (\ref{j3fin}), we get
\begin{eqnarray}\label{Tphifin}
T_\varphi^a(x)=\dfrac{x\log x}{2\zeta(2)}+E_{T,W}(x)
\end{eqnarray}
where 
\begin{eqnarray}
E_{T,W}(x)&=&O\left(\dfrac{x\log T}{T}\right)+O\left(x^{\sigma_1}(\log T)^2\right)+O\left(\dfrac{x^2}{W}\right)+O(x).
\end{eqnarray}
We now choose 
\begin{eqnarray*}
T&=&\exp\left({\sqrt{c\log x}}\right)\notag\\
W&=&x
\end{eqnarray*}
to find that 
\begin{eqnarray}\label{err23}
E_{T,W}(x)=O(x).
\end{eqnarray}
Thus from (\ref{Sphi}), (\ref{Tphifin}) and (\ref{err23}) we get
\begin{eqnarray}
\int_{1}^xS_\varphi(t)\;dt=\dfrac{x^2\log x}{2\zeta(2)}+O(x^2).
\end{eqnarray} 
\section{Proof of Corollary \ref{cor1}}
Let for $t\geq 2$, there exists an $\alpha\in\mathbb{R}$ such that 
\begin{eqnarray}\label{asy}
S_\varphi(t)=\alpha t\log t(1+o(1)).
\end{eqnarray}
Then integrating both sides of (\ref{asy}) in $[1,x]$, we get 
\begin{eqnarray}\label{asy1}
\int_{1}^xS_\varphi(t)\;dt=\dfrac{\alpha x^2\log x(1+o(1))}{2}
\end{eqnarray}
Comparing (\ref{asy1}) and Theorem \ref{main}, we immediately see that
\begin{eqnarray*}
\alpha=\dfrac{1}{\zeta(2)}+o(1),\;\;x\rightarrow\infty
\end{eqnarray*} 
and thus the result follows.
\section{Proof of Corollary \ref{cor2}}
Let 
\begin{eqnarray}\label{req0}
S_\varphi(x)=S^{*}_\varphi(x)+E_\varphi(x)
\end{eqnarray}
where $E_\varphi(x)=o(x\log x)$. First, choose any $\beta_1>1$. Then from Theorem \ref{main} we have
\begin{eqnarray}\label{req1}
\int_{x}^{\beta_1 x}S_\varphi(t)\;dt&=&\int_{1}^{\beta_1 x}S_\varphi(t)\;dt-\int_{1}^{x}S_\varphi(t)\;dt\notag\\
&=&\dfrac{(\beta_1 x)^2\log (\beta_1 x)}{2\zeta(2)}-\dfrac{x^2\log (x)}{2\zeta(2)}+O(x^2)\notag\\
&=&\dfrac{x^2\log x}{2\zeta(2)}\left(\beta_1^2-1\right)+O(x^2)
\end{eqnarray}
Since $S^{*}_\varphi(t)$ is monotonically non-decreasing, from (\ref{req0}) we have
\begin{eqnarray}\label{req2}
\int_{x}^{\beta_1 x}S_\varphi(t)\;dt&=&\int_{x}^{\beta_1 x}S^{*}_\varphi(t)\;dt+o(x^2\log x)\notag\\
&\geq &xS^{*}_{\varphi}(x)(\beta_1-1)+o(x^2\log x).
\end{eqnarray}
Hence, (\ref{req1}) and (\ref{req2}) yield
\begin{eqnarray}\label{req3}
S^{*}_{\varphi}(x)\leq \dfrac{x\log x}{2\zeta(2)}\cdot\left(\dfrac{\beta_1^2-1}{\beta_1-1}\right)(1+o(1)).
\end{eqnarray}
Keeping $\beta_1$ fixed and letting $x\rightarrow \infty$ on both sides of (\ref{req3}) we get
\begin{eqnarray}\label{req4}
\limsup_{x\rightarrow\infty}\dfrac{S^{*}_\varphi(x)}{x\log x}\leq \dfrac{1}{2\zeta(2)}\left(\dfrac{\beta_1^2-1}{\beta_1-1}\right).
\end{eqnarray}
Now let $\beta_1\rightarrow 1^+$ in (\ref{req4}) to obtain
\begin{eqnarray}
\limsup_{x\rightarrow\infty}\dfrac{S^{*}_\varphi(x)}{x\log x}\leq \dfrac{1}{\zeta(2)}.
\end{eqnarray}
Now consider any $\beta_2$ with $0<\beta_2<1$ and consider the difference $S_\varphi(x)-S_\varphi(\beta_2 x)$. An argument similar to the above shows that 
\begin{eqnarray*}
\liminf_{x\rightarrow\infty}\dfrac{S^{*}_{\varphi}(x)}{x\log x}\geq \dfrac{1}{2\zeta(2)}\left(\dfrac{1-\beta_2^2}{1-\beta_2}\right).
\end{eqnarray*}
As $\beta_2\rightarrow 1^-$, the right hand side above tends to $1/\zeta(2)$. This together with (\ref{req4}) and the fact that $E_\varphi(x)=o(x\log x)$ proves the corollary. 
\section{Proof of Corollary $\ref{cor3}$}
From Theorem \ref{main} we have
\begin{eqnarray}\label{ch2}
\int_{x}^{x+h}S_\varphi(t)\;dt=\dfrac{hx\log x}{\zeta(2)}+O(h^2\log x)+O(x^2).
\end{eqnarray}
Again from (\ref{ch2}) we obtain
\begin{eqnarray}\label{ch6}
hS_\varphi(x)=\dfrac{hx\log x}{\zeta(2)}+O(h^2\log x)+O(x^2)-L
\end{eqnarray}
where 
\begin{eqnarray}\label{ch3}
L=\int_{x}^{x+h}(S_\varphi(t)-S_{\varphi}(x))\;dt.
\end{eqnarray}
Since $L$ satisfies (\ref{coreq1}), from (\ref{ch6}) and (\ref{ch3}) we have
\begin{eqnarray}
S_\varphi(x)=\dfrac{x\log x}{\zeta(2)}+O(h\log x)+O\left(\dfrac{x^2}{h}\right).
\end{eqnarray}
By choosing $h=x(\log x)^{-1/2}$, we see that 
\begin{eqnarray}
S_\varphi(x)=\dfrac{x\log x}{\zeta(2)}+O(x\sqrt{\log x}).
\end{eqnarray}
\section{Proof of Corollary $\ref{cor4}$}
It is clear from (\ref{Tphifin}) that $T^a_\varphi(x)$ satisfies
\begin{eqnarray}\label{eqeq1}
\lim_{x\rightarrow\infty}\dfrac{T^a_{\varphi}(x)}{x\log x}=\dfrac{1}{2\zeta(2)}.
\end{eqnarray}
From (\ref{Sphi}), we have for any $h>0$,
\begin{eqnarray}\label{eqeq4}
S_\varphi(x)&=&\sum_{n\leq x}\Phi(n)+O(x)\notag\\
&=&\dfrac{x+h}{h}\sum_{n\leq x+h}\Phi(n)\left(1-\dfrac{n}{x+h}\right)-\dfrac{x}{h}\sum_{n\leq x}\Phi(n)\left(1-\dfrac{n}{x}\right)\notag\\&&-\dfrac{1}{h}\sum_{x<n<x+h}\Phi(n)(x+h-n)+O(x)\notag\\
&=& \dfrac{x+h}{h}\cdot T^a_{\varphi}(x+h)-\dfrac{x}{h}\cdot T^a_{\varphi}(x)-U_{\varphi}(x)+O(x)
\end{eqnarray}
where $U_{\varphi}(x)=h^{-1}\sum_{x<n<x+h}\Phi(n)(x+h-n)$. We take $h=\varepsilon x$ for some $\varepsilon>0$. From (\ref{eqeq1}), we see that
\begin{eqnarray}\label{eqeq2}
\lim_{x\rightarrow\infty}\dfrac{1}{x\log x}\left(\dfrac{x+h}{h}\cdot T^{a}_{\varphi}(x+h)\right)&=&\dfrac{1}{2\zeta(2)}\cdot \dfrac{(1+\varepsilon)^2}{\varepsilon},\notag\\
\lim_{x\rightarrow\infty}\dfrac{1}{x\log x}\left(\dfrac{x}{h}\cdot T^{a}_{\varphi}(x)\right)&=&\dfrac{1}{2\zeta(2)}\cdot \dfrac{1}{\varepsilon}.
\end{eqnarray}
From (\ref{bou}), we see that
\begin{eqnarray*}
U_{\varphi}(x)\geq -C\sum_{x<n<x+h}\log n\geq -Ch\log x=-C\varepsilon x\log x
\end{eqnarray*}
and hence 
\begin{eqnarray}\label{eqeq3}
\liminf_{x\rightarrow\infty}\dfrac{U_{\varphi}(x)}{x\log x}\geq -C\varepsilon.
\end{eqnarray}
Combining (\ref{eqeq4}), (\ref{eqeq2}) and (\ref{eqeq3}) we get
\begin{eqnarray}
\limsup_{x\rightarrow\infty}\dfrac{S_\varphi(x)}{x\log x}\leq \dfrac{1}{\zeta(2)}+\varepsilon\left(C+1/2\zeta(2)\right).
\end{eqnarray}
Since $\varepsilon$ can take arbitrarily small values, it follows that
\begin{eqnarray}
\limsup_{x\rightarrow\infty}\dfrac{S_\varphi(x)}{x\log x}\leq \dfrac{1}{\zeta(2)}.
\end{eqnarray} 
To obtain a corresponding lower bound, we note that
\begin{eqnarray}
S_\varphi(x)&=&\sum_{n\leq x}\Phi(n)+O(x)\notag\\
&=&\dfrac{x}{h}\sum_{n\leq x}\Phi(n)\left(1-\dfrac{n}{x}\right)-\dfrac{x-h}{h}\sum_{n\leq x-h}\Phi(n)\left(1-\dfrac{n}{x-h}\right)\notag\\&&+\dfrac{1}{h}\sum_{x-h<n<x}\Phi(n)(n+h-x)+O(x)\notag\\
&=& \dfrac{x}{h}\cdot T^a_{\varphi}(x)-\dfrac{x-h}{h}\cdot T^a_{\varphi}(x-h)+V_{\varphi}(x)+O(x)
\end{eqnarray}
where $V_{\varphi}(x)=h^{-1}\sum_{x-h<n<x}\Phi(n)(n+h-x)$. Arguing as we did before, we obtain
\begin{eqnarray}
\liminf_{x\rightarrow\infty}\dfrac{S_\varphi(x)}{x\log x}\geq \dfrac{1}{\zeta(2)}-\varepsilon(1/2\zeta(2)+C/2)
\end{eqnarray}
so that
\begin{eqnarray}
\liminf_{x\rightarrow\infty}\dfrac{S_\varphi(x)}{x\log x}\geq \dfrac{1}{\zeta(2)}
\end{eqnarray}
and thus the result follows.
\section{Conclusions}
We note that our method does not make any use of the rate of growth of the totient function $\varphi(n)$. It mainly uses the representation (\ref{Dphi1}) for the Dirichlet series generating function of $S_\varphi(x)$ in terms of the zeta functions, and as such our method may be used to treat sums with more general functions involved, provided the corresponding Dirichlet series have similar representations. 

Secondly, our repesentation of the Dirichlet series $D_\varphi(s)$ in terms of the zeta functions relies on Lemma \ref{Apo}. Using this representation, it is really difficult to apply Perron's formula directly. One of the difficulties is when we truncate the contour at a height $T$, the truncation error we obtain is through analysis of coefficients of the Dirichlet series and in our case, the integral $J_3(x)$ poses some problems along with $I_3(s)$ which does not have any explicit form. Thus we have to consider the weighted sum where we do not require any such analysis. However, if one can obtain a better representation for $D_\varphi(s)$, it might be possibe to apply Perron's formula directly to obtain an asymptotic formula for $S_\varphi(x)$. 

Finally one can consider, for example, the following Riesz-type weighted sum:
\begin{eqnarray}
T^r_{\varphi}(x):=\sum_{n\leq x}\Phi(n)\log (x/n)=\int_{1}^x\dfrac{S_\varphi(t)}{t}\;dt+O(x).
\end{eqnarray}         
It can be shown via a similar analysis that
\begin{eqnarray}
\int_{1}^x\dfrac{S_\varphi(t)}{t}\;dt\sim\dfrac{x\log x}{\zeta(2)},\;x\rightarrow\infty.
\end{eqnarray}
We leave the details of proof to the interested reader. 
\section{Acknowledgement}
The author is indebted to Prof. K. Alladi for his support, encouragement and stimulating discussions. He greatly acknowledges his valuable suggestions on an earlier draft of the manuscript which fixed a few errors and typos besides improving the exposition. The suggestion to consider a Ces\`{a}ro-type weighted sum is due to Prof. K. Alladi.     

\end{document}